\newtheorem{thm}{Theorem}[section]
\newtheorem{lem}[thm]{Lemma}
\newtheorem{cor}[thm]{Corollary}
\newtheorem{Def}[thm]{Definition}
\newtheorem{prop}[thm]{Proposition}
\newtheorem{rem}[thm]{Remark}
\newtheorem{ex}[thm]{Example}
\newcommand{\bdfn}{\begin{Def} \rm}
\newcommand{\edfn}{\end{Def}}
\newcommand{\lgra}{\longrightarrow}
\newcommand{\sm}{\setminus}
\newcommand{\beqa}{\begin{eqnarray*}}
\newcommand{\eeqa}{\end{eqnarray*}}
\def\nn{\|\cdot\|}
\newcommand{\tnorm}{|\hskip-.07em\|}
\def\nnn{\tnorm\cdot\tnorm}
\def\ee{\varepsilon}
\def\aa{\alpha}
\def\ll{\lambda}
\def\dd{\delta}
\def\Ext{\, {\rm Ext}\, }
\def\N{\Bbb N}
\def\IN{\hbox{{\rm I}\kern-.13em{\rm N}}}
\def\IR{\hbox{{\rm I}\kern-.13em{\rm R}}}
\def\nin{n\in\N}
\newcounter{cnt1}
\newcounter{cnt2}
\newcounter{cnt3}
\newcounter{cnt4}
\newcommand{\blr}{\begin{list}{$($\roman{cnt1}$)$} {\usecounter{cnt1}
 \setlength{\topsep}{0pt} \setlength{\itemsep}{0pt}}}
\newcommand{\blR}{\begin{list}{\Roman{cnt4}.\ } {\usecounter{cnt4}
 \setlength{\topsep}{0pt} \setlength{\itemsep}{0pt}}}
\newcommand{\bla}{\begin{list}{$(\alph{cnt2})$} {\usecounter{cnt2}
 \setlength{\topsep}{0pt} \setlength{\itemsep}{0pt}}}
\newcommand{\bln}{\begin{list}{$($\arabic{cnt3}$)$} {\usecounter{cnt3}
 \setlength{\topsep}{0pt} \setlength{\itemsep}{0pt}}}
\newcommand{\el}{\end{list}}
\begin{document}
\title[\tiny{A study of weak$^*$-weak points of continuity in Banach spaces}]{A study of weak$^*$-weak points of continuity in the unit ball of dual spaces}
\author[Daptari, Montesinos, Rao]{S. Daptari, V. Montesinos, and T. S. S. R. K. Rao}

\address{\textbf{Current address of Dr. Daptari:} Katsushika Division, Institute of Arts and Sciences, Tokyo University of Science, Tokyo 125-8585, Japan}
\email{daptarisoumitra@gmail.com}

\address{Shiv Nadar IoE. Gautam Buddha Nagar-201314, India}
\email{srin@fulbrightmail.org}
\address{Vicente Montesinos. Departamento de Matem\'atica Aplicada. Universitat Polit\`ecnica de Val\`encia, Camino de Vera s/n, 46071 Valencia, Spain}
\email{vmontesinos@mat.upv.es}
\subjclass[2020]{Primary 46A22, 46B10, 46B25, 47L40; Secondary 46B20, 46B22. \hfill
\textbf{\today}}
\keywords{Unique Hahn--Banach extension property, weak$^*$-weak points of continuity, $G_{\delta}$ sets of points of continuity, $M$-ideals, geometry of higher duals of Banach spaces, $C^*$-algebras, von Neumann algebras}
\maketitle
\begin{abstract}We study classes of Banach spaces where the points of weak$^*$-weak continuity for the identity mapping on the dual unit ball form a weak$^*$-dense and weak$^*$-$G_{\delta}$ set. We also discuss how this property behaves in higher duals of Banach spaces. We prove in particular that if $\mathcal{A}$ is a von Neumann algebra and its predual has the Radon--Nikod\'ym property, then there is no point of weak$^*$-weak continuity on the unit ball of $\mathcal{A}$.
\end{abstract}
\textbf{To appear in Revista de la Real Academia de Ciencias Exactas, F\'{i}sicas y Naturales. Serie A. Matem\'{a}ticas.}
\maketitle
\section{Introduction}
A well-known  notion in the geometry of Banach spaces is that of a denting point of the closed unit ball $B(X)$ of a Banach space $(X,\nn)$. We recall from \cite{DU} that $x_0 \in B(X)$ is said to be a {\bf denting point} if for any $\epsilon>0$, $x_0 \notin \overline{\rm{co}}( B(X)\sm B(x_0,\epsilon))$, where `co' denotes the convex hull and $B(x_0,\ee):=\{x\in X:\ \|x-x_0\|<\ee\}$. This has been shown in \cite{LLT} to be equivalent to $x_0\in\Ext(B(X))$ (i.e., $x_0$ being an extreme point of $B(X)$) and a point of continuity of the identity mapping $id:(S(X),w) \longrightarrow (S(X),\|.\|)$, where $S(X)$ denotes the unit sphere of $X$ and $w$ stands for the weak topology. In the case of a dual space, one considers the notion of $w^*$-denting points, where the $w$-topology gets replaced by the $w^*$-topology. An interesting question is to consider only points of continuity (not necessarily extreme points) and the geometric implications related to the ``richness'' of these points. We refer to the monograph \cite{Ho} for basic results in  geometry of Banach spaces.

Our work is motivated by \cite{HL}, where the authors study points of continuity of $id: (B(X^*),w)\longrightarrow (B(X^*),\nn)$ and $id: (B(X^{*}),w^*)\longrightarrow (B(X^{*}),\|.\|)$. We focuss instead on the study of points of continuity of $id: (B(X^{*}),w^*)\longrightarrow (B(X^{*}),w)$ (we shall speak of ``points of $w^*$-$w$-continuity'', and ---needless to say--- we are interested only in the nonreflexive case).

\medskip

Our notation is standard. Along the paper, $(X,\nn)$ will be a Banach space (we shall write just $X$ is there is no need to refer to the underlying norm). ``Equality'' of Banach spaces is understood as ``linear isometric equality''. As usual, we consider a Banach space $X$ canonically embedded in its bidual $X^{**}$. Similar canonical identifications are done for $X^*\subset X^{***}$, $X^{**}\subset X^{(IV)}$ and so on (higher duals are denoted as expected). The symbol $\{x_n\}$ will denote a sequence as well as a set, hoping that there will be no misunderstanding; the symbol $(x_n)$ will be used for vectors in a vector space.

\section{An interlude: $B(X^*)$ versus $S(X^*)$}
Despite the elementary nature of this section, we find it worth to clarify the reason why the statement of Lemma \ref{lemma-godefroy} below and many other similar results in this paper, although dealing with $id: (B(X^{*}),w^*)\longrightarrow (B(X^{*}),w)$, focus only on points in $S(X^*)$.

\medskip

We start with the following lemma, a $w^*$-$w$-version of \cite[Lemma 2.1]{HL}:
\begin{lem}\label{l22}
    Let $x^*\in B(X^*)$ be a point of continuity of $id: (B(X^{*}),w^*)\longrightarrow (B(X^{*}),w)$. Assume that $x^*=\lambda y^*+(1-\lambda)z^*$, where $y^*,z^*\in B(X^*)$ and $\lambda\in (0,1)$. Then both $y^*$ and $z^*$ are points of continuity of $id: (B(X^{*}),w^*)\longrightarrow (B(X^{*}),w)$.
\end{lem}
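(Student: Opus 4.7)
My plan is to reduce the claim for $y^*$ to the hypothesis at $x^*$ via an affine perturbation trick; the argument for $z^*$ is symmetric, so I only need to treat $y^*$.

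Let $(y^*_\al)_\al$ be any net in $B(X^*)$ with $y^*_\al\xrightarrow{w^*} y^*$. The task is to show $y^*_\al\xrightarrow{w} y^*$. Set
\[
x^*_\al:=\la y^*_\al+(1-\la)z^*\quad(\al\in A).
\]
Since $\la\in(0,1)$ and $y^*_\al,z^*\in B(X^*)$, each $x^*_\al$ is a convex combination of elements of $B(X^*)$, hence $x^*_\al\in B(X^*)$. The weak$^*$-continuity of the scalar multiplication and of the translation by $(1-\la)z^*$ gives
\[
x^*_\al=\la y^*_\al+(1-\la)z^*\xrightarrow{w^*}\la y^*+(1-\la)z^*=x^*.
\]
Now the hypothesis that $x^*$ is a point of $w^*$-$w$-continuity of $\mathrm{id}\colon (B(X^*),w^*)\lgra (B(X^*),w)$ yields $x^*_\al\xrightarrow{w} x^*$. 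Subtracting the constant $(1-\la)z^*$ and dividing by $\la>0$ (both operations are $w$-continuous) we get $y^*_\al\xrightarrow{w} y^*$, which is what was required.

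The analogous argument, with the roles of $y^*$ and $z^*$ interchanged (so one perturbs the approximating net of $z^*$ by the fixed element $y^*$ and uses the weight $1-\la$ instead of $\la$), shows that $z^*$ is also a point of $w^*$-$w$-continuity.

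I do not foresee any real obstacle: the only thing to verify carefully is that $x^*_\al$ lies in $B(X^*)$ so that the continuity hypothesis at $x^*$ can be applied, and this is immediate from convexity. Working with nets (rather than sequences) is what makes this argument valid without any metrizability assumption on $(B(X^*),w^*)$.
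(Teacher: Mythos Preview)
Your proof is correct and follows essentially the same argument as the paper: take a net $w^*$-converging to $y^*$, push it through the affine map $u^*\mapsto \la u^*+(1-\la)z^*$ to land in $B(X^*)$ and $w^*$-converge to $x^*$, apply the continuity hypothesis at $x^*$, and invert the affine map. The only difference is that you spell out the justification of each step (convexity for membership in $B(X^*)$, $w$-continuity of the affine inverse) slightly more explicitly than the paper does.
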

\begin{proof}Let $\{y^*_{\alpha}\}\subset B(X^*)$ be a net such that $y^*_{\alpha} \stackrel{w^*}\rightarrow y^*$. Clearly $\lambda y^\ast_{\alpha}+ (1-\lambda)z^\ast\in B(X^*)$ for all $\aa$,  and $\lambda y^\ast_{\alpha}+ (1-\lambda)z^\ast \stackrel{w^*}\rightarrow x^*$. By hypothesis, $\lambda y^\ast_{\alpha}+ (1-\lambda)z^\ast \stackrel{w}\rightarrow x^*\ (=\ll y^*+(1-\ll)z^*)$. It follows that $y^\ast_{\alpha} \stackrel{w}\rightarrow y^\ast$. The same argument applies to $z^*$, and the proof is over.
\end{proof}
The following is a straightforward consequence of the previous lemma, and locates the points of $w^*$-$w$-continuity of $B(X^*)$ on $S(X^*)$ in the nonreflexive case:
\begin{lem}\label{lemma-sphere} Let $X$ be a nonreflexive Banach space, and let $x^*\in B(X^*)$ be a point of continuity of $id: (B(X^{*}),w^*)\longrightarrow (B(X^{*}),w)$. Then, $x^*\in S(X^*)$.
\end{lem}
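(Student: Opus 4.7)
The plan is to argue by contradiction: assume $\|x^*\| < 1$ and show $X$ must be reflexive, using Lemma \ref{l22} as the lever.

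First I would show, using Lemma \ref{l22}, that the assumption $\|x^*\| < 1$ forces \emph{every} point of $B(X^*)$ to be a point of $w^*$-$w$-continuity. To this end, fix an arbitrary $y^* \in B(X^*)$ and set
\[
\lambda := \frac{1-\|x^*\|}{2} \in (0,1), \qquad z^* := \frac{x^* - \lambda y^*}{1-\lambda}.
\]
A direct triangle-inequality estimate gives $\|x^* - \lambda y^*\| \leq \|x^*\| + \lambda \leq 1 - \lambda$, so $z^* \in B(X^*)$. By construction $x^* = \lambda y^* + (1-\lambda) z^*$, so Lemma \ref{l22} yields that $y^*$ (and $z^*$) is a point of $w^*$-$w$-continuity of the identity on $B(X^*)$.

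Next I would convert this pointwise statement into a global one. Since continuity of $id \colon (B(X^{*}),w^*)\longrightarrow (B(X^{*}),w)$ holds at every point of $B(X^*)$, the identity is continuous on $(B(X^*),w^*)$ as a whole. The space $(B(X^*),w^*)$ is compact by the Banach--Alaoglu theorem and $(B(X^*),w)$ is Hausdorff, so the continuous image $(B(X^*),w)$ is also compact. This implies that $B(X^*)$ is $w$-compact, and therefore $X^*$, and hence $X$, is reflexive, contradicting the hypothesis. Thus $\|x^*\| = 1$.

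I do not anticipate a real obstacle here: once the convex-combination identity is set up with the correct value of $\lambda$, Lemma \ref{l22} immediately propagates continuity to an arbitrary point of $B(X^*)$. The only place requiring mild care is the bookkeeping to verify $z^* \in B(X^*)$, and the translation from ``every point is a continuity point'' to ``reflexivity'' via weak-star compactness.
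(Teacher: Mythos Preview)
Your proof is correct and follows essentially the same route as the paper's: assume $\|x^*\|<1$, use Lemma~\ref{l22} to propagate $w^*$-$w$-continuity to every point of $B(X^*)$, and conclude reflexivity. The only differences are cosmetic---you supply an explicit choice of $\lambda$ and $z^*$ (the paper just asserts their existence) and you phrase the final step via Banach--Alaoglu and weak compactness rather than simply saying that $w$ and $w^*$ agree on $B(X^*)$.
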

\begin{proof}

      Assume $\|x^*\|<1$. Let $(x^*\not=)\  y^*\in B(X^*)$. Obviously, $x^*=\ll y^*+(1-\ll)z^*$ for some $z^*\in B(X^*)$ and $\ll\in(0,1)$. Lemma \ref{l22} shows that $y^*$ is a point of continuity of $id:(B(X^*),w^*)\longrightarrow (B(X^*),w)$. It follows that $w$ and $w^*$ agree on $B(X^*)$, hence $X$ is reflexive, a contradiction.
\end{proof}
The next lemma is an almost trivial consequence of the $w^*$-lower-semicontinuity of the dual norm of a Banach space, and thus its proof will be omitted.
\begin{lem}\label{l24}
A point $x^*\in S(X^*)$ is of continuity of $id: (B(X^{*}),w^*)\longrightarrow (B(X^{*}),w)$ if and only if it is a point of continuity of $id: (S(X^{*}),w^*)\longrightarrow (S(X^{*}),w)$.
\end{lem}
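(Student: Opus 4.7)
The ``only if'' direction is immediate: if $x^*\in S(X^*)$ is a point of continuity for $id:(B(X^*),w^*)\lgra (B(X^*),w)$, then any net $\{x^*_\alpha\}\subset S(X^*)\subseteq B(X^*)$ with $x^*_\alpha\stackrel{w^*}{\lgra} x^*$ automatically satisfies $x^*_\alpha\stackrel{w}{\lgra} x^*$, so restriction to the sphere gives continuity there.

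For the ``if'' direction, I would start with an arbitrary net $\{x^*_\alpha\}\subset B(X^*)$ with $x^*_\alpha\stackrel{w^*}{\lgra}x^*$, and aim to show $w$-convergence. The key ingredient is that the dual norm is $w^*$-lower-semicontinuous, which forces $\liminf_\alpha \|x^*_\alpha\|\geq \|x^*\|=1$. Combined with $\|x^*_\alpha\|\leq 1$, a standard subnet argument (every subnet has a further subnet along which the norms converge to some $t\in[\|x^*\|,1]=\{1\}$) shows that actually $\|x^*_\alpha\|\lgra 1$.

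Once the norms converge to $1$, I would normalize: set $y^*_\alpha:=x^*_\alpha/\|x^*_\alpha\|\in S(X^*)$ (well-defined eventually, since $\|x^*_\alpha\|\to 1$). Because $y^*_\alpha-x^*_\alpha=(1-\|x^*_\alpha\|)\,y^*_\alpha/\|x^*_\alpha\|\cdot\|x^*_\alpha\|$ has norm tending to $0$, we get $y^*_\alpha\stackrel{w^*}{\lgra} x^*$, so by hypothesis $y^*_\alpha\stackrel{w}{\lgra} x^*$. Then
\[
x^*_\alpha-x^*=\|x^*_\alpha\|(y^*_\alpha-x^*)+(\|x^*_\alpha\|-1)x^*
\]
shows that $x^*_\alpha\stackrel{w}{\lgra} x^*$, since the first summand is weakly null (after noting $\|x^*_\alpha\|$ is bounded) and the second converges to $0$ in norm.

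The only mild technical point is the subnet bookkeeping needed to upgrade ``every subnet has a further sub-subnet with $\|\cdot\|\to 1$'' to ``$\|x^*_\alpha\|\to 1$'', and similarly to promote subnet $w$-convergence to convergence of the original net. This is why the authors describe the lemma as ``almost trivial'': once one accepts that $w^*$-lsc of the norm pins the norms to the sphere in the limit, everything else is a direct normalization argument.
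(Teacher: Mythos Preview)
Your argument is correct and follows exactly the line the paper indicates (the proof there is omitted, with only the hint that it is ``an almost trivial consequence of the $w^*$-lower-semicontinuity of the dual norm''). One small simplification: the subnet bookkeeping you mention is not needed, since $\liminf_\alpha\|x^*_\alpha\|\ge 1$ together with $\|x^*_\alpha\|\le 1$ already gives $\lim_\alpha\|x^*_\alpha\|=1$ directly; after that, your normalization step goes through verbatim.
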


\section{Introduction (continued) and some definitions}\label{sect-intro-bis}

In this paper, we also study the geometric property of points of $w^*$-$w$-continuity in the unit sphere of higher odd dual of a nonreflexive Banach space. We use renorming techniques to exhibit points of $w^*$-$w$-continuity in $B(X^*)$ that do not remain as such in $B(X^{***})$ (see Theorem~\ref{thm-star-not-3star} below). In the other direction (i.e., existence of points of $w^*$-$w$-continuity in higher duals that do not remain such when ``going down''), we exhibit some ``dramatic'' situations in spaces $L^1(\mu)$ of integrable functions, where (in the non-atomic case) there are no such points in the second dual unit ball, yet they do exist when passing to the fourth dual (Theorem \ref{thm-4-yes-2-no}).

\medskip

For every Banach space $X$, we have $X^{***}= X^*\oplus X^{\perp}$, where $X^{\perp}$ is the annihilator of $X$ (in $X^{***}$). Recall some geometric notions from \cite{HWW}.
A closed subspace $Y$ of a Banach space $X$ is said to be an {\bf $M$-ideal in $X$} if there exists a linear projection $P:X^*\longrightarrow X^*$ with $\ker P=Y^\perp$ and $\|x^*\|=\|Px^*\|+\|(I-P)x^*\|$ for all $x^*\in X^*$ (such a projection is called an {\bf $L$-projection}). A Banach space $X$ is said to be an {\bf $M$-embedded} space if under the canonical embedding, it is an $M$-ideal in $X^{**}$. It is equivalent to say that $X^{***}=X^{*}\oplus_1X^{\perp}$ \cite[Proposition III.3.2]{HWW}. See \cite[Chapters III and VI]{HWW} for several examples of those concepts  among function spaces and spaces of operators. A projection $P: X \longrightarrow X$ is said to be an {\bf $M$-projection} if $\|x\|=\max \{\|Px\|,\|x-Px\|\}$ for all $x\in X$. The range of an $M$-projection is called an {\bf $M$-summand} (see \cite[Definition~I.1.1]{HWW}). Let ${\mathcal L}(X)$, ${\mathcal K}(X)$ denote the space of bounded and compact operators on $X$, respectively. A particularly interesting situation occurs when ${\mathcal K}(X)$ is an $M$-ideal in ${\mathcal L}(X)$. In this case, the $w^*$, $w$, and norm topologies coincide on $S(X^*)$ (see Chapter VI and Chapter III of \cite{HWW}).
One of our main results implies that when $X$ is a nonreflexive Banach space such that ${\mathcal K}(X)$ is an $M$-ideal in ${\mathcal L}(X)$,  these points of continuity in $B(X^{***})$ form a weak$^*$-dense and weak$^*$-$G_{\delta}$ set  in $S(X^{***})$ (see Theorem~\ref{5t3}).

\medskip

Let $Y$ be a closed subspace of a Banach space $X$. For $x\in X$ we denote
\begin{equation}
P_Y(x):=\{y\in Y:\|x-y\|=d(x,Y)\}
\end{equation}
the {\bf metric projection} of $x$ into $Y$. The set $P_Y(x)$  can be empty. If for every $x\in X$ we have $P_Y(x)\not=\emptyset$, then $Y$ is said to be a {\bf proximinal} subspace. If, moreover, $P_Y(x)$ is a singleton for all $x\in X$, then $Y$ is said to be a {\bf Chebyshev} subspace.

A closed subspace $Y$ of $X$ is said to have {\bf property-$(U)$} if every bounded linear functional on $Y$ has a unique norm-preserving extension to $X$. Property-$(U)$ was introduced by R. R. Phelps in \cite[Theorem~1]{P}. According to him, $Y$ has property-$(U)$ if and only if $Y^{\perp}$ is a Chebyshev subspace of $X^*$. See also the recent work \cite{C} for a study of subspaces of $C(K)$ spaces with property-$(U)$, where $C(K)$ denotes the set of all continuous functions on a compact Hausdorff space K, equipped with the supremum norm. If we consider $X$ as a subspace of $X^{**}$, then $X$ is said to be {\bf Hahn--Banach smooth} if $X$ has property-$(U)$ in $X^{**}$.

\medskip

Property-$(U)$ of $X$ and the $w^*$-$w$-continuity of the identity mapping $id:B(X^*)\longrightarrow B(X^*)$ are closely related: It is enough to observe the following result, an application of Goldstine  theorem on the $w^*$-denseness of $B(X^*)$ in  $B(X^{***})$ and the $w^*$-compactness of $B(X^{***})$:
\begin{lem}\label{lemma-godefroy}\cite{G}
    Let $X$ be a Banach space. Then, $x^*\in S(X^*)$ is a point of continuity of $id: (B(X^{*}),w^*)\longrightarrow (B(X^{*}),w)$ if and only if $x^*$ has a unique norm-preserving extension to $X^{**}$.
 \end{lem}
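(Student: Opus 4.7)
The plan is to use Goldstine's theorem and the $w^*$-compactness of $B(X^{***})$, moving back and forth between three topologies on $X^*$: the $w^*$-topology $\sigma(X^*,X)$, the weak topology $\sigma(X^*,X^{**})$, and the restriction to $X^*$ of $\sigma(X^{***},X^{**})$ (the latter two coincide once we identify $X^*$ with its canonical image in $X^{***}$).

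For the forward implication, I would assume $x^*\in S(X^*)$ is a point of $w^*$-$w$-continuity and take any $\varphi\in X^{***}$ extending $x^*$ with $\|\varphi\|=1$. Goldstine's theorem yields a net $\{x^*_\alpha\}\subset B(X^*)$ with $x^*_\alpha\to\varphi$ in $\sigma(X^{***},X^{**})$. Restricting to $X\subset X^{**}$ gives $x^*_\alpha\to x^*$ in $\sigma(X^*,X)$, so by hypothesis $x^*_\alpha\to x^*$ also in $\sigma(X^*,X^{**})$. Testing both limits against an arbitrary $x^{**}\in X^{**}$ gives $\varphi(x^{**})=x^{**}(x^*)$, so $\varphi=x^*$ in $X^{***}$, which is the desired uniqueness.

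For the reverse implication, I would take a net $\{x^*_\alpha\}\subset B(X^*)$ with $x^*_\alpha\to x^*$ in $\sigma(X^*,X)$ and show $x^*_\alpha\to x^*$ in $\sigma(X^*,X^{**})$ by a subnet argument. By $w^*$-compactness of $B(X^{***})$, every subnet has a further subnet $\sigma(X^{***},X^{**})$-convergent to some $\varphi\in B(X^{***})$. Restricting to $X$ shows $\varphi$ extends $x^*$, so $\|\varphi\|\ge\|x^*\|=1$, forcing $\|\varphi\|=1$. The uniqueness hypothesis then forces $\varphi=x^*$, so every subnet has a subnet converging to $x^*$ in $\sigma(X^{***},X^{**})$, and the original net converges to $x^*$ in $\sigma(X^*,X^{**})$.

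The only delicate point is bookkeeping the identification of topologies: when $x^*_\alpha$ is viewed in $X^{***}$, its action on $x^{**}\in X^{**}$ is $x^{**}(x^*_\alpha)$, so $\sigma(X^{***},X^{**})$-convergence on the image of $X^*$ is exactly $\sigma(X^*,X^{**})$-convergence. Once this is pinned down, both implications are short. I do not anticipate a serious obstacle beyond this bookkeeping; the proof is essentially a translation between Hahn--Banach uniqueness and $w^*$-$w$-continuity via the Goldstine approximation.
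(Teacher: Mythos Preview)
Your proposal is correct and follows precisely the approach indicated in the paper: the lemma is stated there without proof (it is attributed to \cite{G}), and the accompanying remark that it is ``an application of Goldstine theorem on the $w^*$-denseness of $B(X^*)$ in $B(X^{***})$ and the $w^*$-compactness of $B(X^{***})$'' is exactly the argument you have written out. The bookkeeping of the identification $\sigma(X^{***},X^{**})|_{X^*}=\sigma(X^*,X^{**})$ and the subnet-of-a-subnet reasoning are handled correctly.
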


The following theorem summarizes some of the results mentioned in the preceding discussion (see also Remark \ref{rem-local} below):
\begin{thm}\label{thm-hb-smooth}
For a Banach space $X$, the following are equivalent:
    \bln
    \item[\rm{(1)}] $id: (B(X^{*}),w^*)\longrightarrow (B(X^{*}),w)$ is continuous at all $x^*\in S(X^*)$ {\em (see Lemma \ref{lemma-sphere} above)}.
    \item[\rm{(2)}] $X$ is Hahn--Banach smooth in $X^{**}$.
    \item[\rm{(3)}] $X^{\perp}$ is a Chebyshev subspace of $X^{***}$.
    \el
\end{thm}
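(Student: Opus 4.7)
The plan is to chain together the three characterizations via Lemma~\ref{lemma-godefroy}, the definition of Hahn--Banach smoothness, and Phelps' classical characterization of property-$(U)$ via the Chebyshev property of the annihilator (already quoted in the paragraph above the theorem). No new machinery is required; the work is purely bookkeeping.

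First, I would establish $(1)\Leftrightarrow(2)$. By Lemma~\ref{lemma-godefroy}, for a fixed $x^*\in S(X^*)$, the map $id:(B(X^*),w^*)\longrightarrow (B(X^*),w)$ is continuous at $x^*$ if and only if $x^*$ admits a unique norm-preserving extension to $X^{**}$. Thus (1) asserts exactly that every $x^*\in S(X^*)$ has a unique norm-preserving extension to $X^{**}$. Uniqueness of norm-preserving extensions is invariant under positive scalar multiplication, and is trivial for $x^*=0$, so this condition is equivalent to asking that every $x^*\in X^*$, i.e., every bounded linear functional on $X$, has a unique norm-preserving extension to $X^{**}$. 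This is precisely property-$(U)$ of $X$ as a subspace of $X^{**}$, i.e., $X$ is Hahn--Banach smooth in $X^{**}$. This proves $(1)\Leftrightarrow(2)$.

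Next, $(2)\Leftrightarrow(3)$ is a direct application of Phelps' theorem from \cite{P}, recalled in the paragraph just before the theorem statement: a closed subspace $Y$ of a Banach space $Z$ has property-$(U)$ if and only if $Y^{\perp}$ is a Chebyshev subspace of $Z^*$. Taking $Z=X^{**}$ and $Y=X$ (via the canonical embedding), and recalling that the annihilator of $X$ in $X^{***}$ is precisely $X^{\perp}$, we see that (2) holds if and only if $X^{\perp}$ is a Chebyshev subspace of $X^{***}$, which is (3).

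There is no real obstacle in this proof: all three equivalences are immediate from results stated in the preceding discussion. The only subtlety worth stating explicitly is the passage from ``uniqueness for norm-one functionals'' to ``uniqueness for all functionals'' in the first equivalence, which follows by homogeneity of norm-preserving extensions. Accordingly, the written proof can be a short paragraph citing Lemma~\ref{lemma-godefroy} and Phelps' result.
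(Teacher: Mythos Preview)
Your proposal is correct and matches the paper's approach exactly: the paper does not give a separate proof of this theorem but states it as a summary of the preceding discussion (Lemma~\ref{lemma-godefroy} for $(1)\Leftrightarrow(2)$ and Phelps' characterization of property-$(U)$ for $(2)\Leftrightarrow(3)$), which is precisely what you have written out.
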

\section{Building points of discontinuity}
We recall from \cite{GIV} the notion of strong proximinality of a  subspace $Y \subset X$. Our definition is equivalent to the one given in \cite{GIV}. A proximinal subspace $Y \subset X$ is said to be {\bf strongly proximinal} if for $x\in X$ and any sequence $\{y_n\} \subset Y$ such that $d(x,Y) = \lim_n \|x-y_n\|$ (such a sequence is called a {\bf minimizing sequence for $x$}), there is a subsequence $\{y_{n_k}\}$ of $\{y_n\}$ and a sequence $\{z_{k}\} \subset P_Y(x)$ such that $\|y_{n_k}-z_{k}\| \rightarrow 0$.  We note that any minimizing sequence is a bounded sequence and that any subsequence (or subnet) of a minimizing sequence is again minimizing (the meaning of a {\bf minimizing net} should be clear). It is easy to see that any finite-dimensional subspace of a Banach space is strongly proximinal.

\medskip

The following theorem gives in particular, for a dual Banach space and for $\tau$ the $w^*$-topology, a procedure for getting vectors in $S(X^*)$ that are not of continuity for  $id:(B(X^*),w^*)\longrightarrow (B(X^*),\nn)$ (see Corollary \ref{cor-reflexive} below).
\begin{thm}\label{2.1}
Let $X$ be a Banach space. Let $Y$ be a Chebyshev not strongly proximinal subspace of $X$. Let $\tau$ be a locally convex topology on $X$ such that $\|.\|$  is a $\tau$-lower semi-continuous function. Assume that, for every $x \notin Y$, every minimizing sequence in $Y$ for $x$ has a subnet that $\tau$-converges to an element in $Y$. Then, there is $z \in S(X)$ that is not a point of $\tau$-norm continuity for $id: (B(X),\tau) \longrightarrow (B(X),\|.\|)$.
\end{thm}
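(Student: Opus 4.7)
The plan is to exploit the failure of strong proximinality to produce a ``bad'' minimizing sequence in norm that nevertheless behaves well under $\tau$, and then to normalize it in order to extract a point of $S(X)$ witnessing the desired discontinuity. Since $Y$ is not strongly proximinal, one can choose $x \in X \sm Y$ and a minimizing sequence $\{y_n\} \ci Y$ for $x$ such that no subsequence of $\{y_n\}$ converges in norm to $P_Y(x)$. Because $Y$ is Chebyshev, $P_Y(x) = \{y_0\}$ is a singleton; hence, after passing to a subsequence, I may fix some $\ee > 0$ with $\|y_n - y_0\| \geq \ee$ for every $n$.

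Next I would invoke the standing assumption on $\tau$: the sequence $\{y_n\}$ admits a subnet $\{y_{n_\aa}\}$ that $\tau$-converges to some $y' \in Y$. Because $\tau$ is locally convex, translation by $x$ is $\tau$-continuous, so $x - y_{n_\aa} \longrightarrow x - y'$ in $\tau$. Combining the $\tau$-lower semicontinuity of $\|\cdot\|$ with the observation (recorded just before the theorem) that any subnet of a minimizing sequence is again minimizing, one obtains
\[
\|x - y'\| \,\leq\, \liminf_{\aa}\, \|x - y_{n_\aa}\| \,=\, d(x,Y),
\]
so that $y' \in P_Y(x)$, and therefore $y' = y_0$.

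Now I would normalize: set $z_n := (x-y_n)/\|x-y_n\|$ and $z := (x-y_0)/\|x-y_0\|$, both in $S(X) \ci B(X)$. Since the scalars $\|x - y_{n_\aa}\|$ converge along the subnet to the strictly positive number $\|x - y_0\|$, and scalar multiplication is $\tau$-continuous, one gets $z_{n_\aa} \longrightarrow z$ in $(B(X),\tau)$. Suppose, for a contradiction, that $z_{n_\aa} \longrightarrow z$ also in norm. Writing $y_{n_\aa} = x - \|x - y_{n_\aa}\|\, z_{n_\aa}$ and using norm-continuity of scalar multiplication yields $y_{n_\aa} \longrightarrow x - \|x-y_0\|\cdot z = y_0$ in norm, contradicting $\|y_{n_\aa} - y_0\| \geq \ee$. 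Hence $z \in S(X)$ is not a point of $\tau$-$\|\cdot\|$ continuity for $id:(B(X),\tau) \longrightarrow (B(X),\|\cdot\|)$.

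The only delicate points I foresee are bookkeeping: passing to a subsequence to secure the uniform gap $\ee$ is legitimate because every subsequence of a minimizing sequence is itself minimizing and still violates the strong proximinality criterion; and the normalization step works precisely because $x \notin Y$ forces $d(x,Y) > 0$, so dividing by $\|x-y_{n_\aa}\|$ along the subnet is well-defined in the limit. No step requires more than the elementary topology of locally convex spaces.
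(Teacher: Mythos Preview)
Your proof is correct and follows essentially the same line as the paper's: extract a bad minimizing sequence, pass to a subsequence to secure the uniform gap $\ee$, use the hypothesis to obtain a $\tau$-convergent subnet whose $\tau$-limit must be the best approximant $y_0$ by lower semicontinuity and the Chebyshev property, and then normalize $x-y_{n_\alpha}$ to produce a $\tau$-convergent net in $S(X)$ that fails to converge in norm. The only cosmetic difference is that the paper first rescales so that $d(x,Y)=1$ and then argues directly that $\|z_i/\|z_i\|-z\|$ stays bounded away from $0$, whereas you keep the distance generic and finish by contradiction; both are equally valid.
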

\begin{proof}
Since $Y$ is Chebyshev and not strongly proximinal, there exists $x\in X$ (notice that $P_Y(x)$ is a singleton, say $\{y_0\}$) and a sequence $\{y_n\}$ in $Y$ that is minimizing for $x$ and such that for every subsequence $\{y_{n_k}\}$ we have $\|y_{n_k}-y_0\|\not\rightarrow 0$. Thus, there exists $\ee>0$ and a subsequence of $\{y_n\}$ (denoted again $\{y_n\}$) such that $\|y_n-y_0\|\ge\ee$ for all $\nin$. Without loss of generality, we may and do assume that $(\|x-y_0\|=)\ d(x,Y)=1$.

By assumption, there exists a subnet $\{y_{n_i}\}$ of the sequence $\{y_n\}$ that $\tau$-converges to some $y'\in Y$. The lower semicontinuity assumption and again the fact that $Y$ is Chebyshev ensures that $y'=y_0$.

Put $z:=x-y_0$ and $z_i:=x-y_{n_i}$ for all $i$. Then, $z_i\stackrel{\tau}\rightarrow z$, $\|z_i\|\rightarrow 1$, and $\|z_i-z\|\ge \ee$ for all $i$. It is obvious then that $\{z_i/\|z_i\|\}\stackrel{\tau}{\rightarrow} z$ and $\{z_i/\|z_i\|\}\not\rightarrow 0$ in norm. This shows the statement.
\end{proof}
\begin{cor}\label{cor-reflexive}
Let $X$ be a Banach space and let $Y$ be a $w^\ast$-closed Chebyshev not strongly proximinal subspace of $X^*$. Then, there is $z\in S(X^*)$ that is not a point of continuity of $id:(B(X^*),w^*)\longrightarrow (B(X^*),\nn)$.
\end{cor}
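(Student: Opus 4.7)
The plan is to deduce the corollary directly from Theorem \ref{2.1}, specialized to the Banach space $X^*$ and the locally convex topology $\tau = w^*$ on $X^*$. I first need to check the two general hypotheses of the theorem in this setting, then the topological compactness condition on minimizing sequences.

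The first hypothesis (that $\tau$ be a locally convex topology making the norm $\tau$-lower semi-continuous) is the standard fact that the dual norm $\|\cdot\|$ on $X^*$ is $w^*$-lower semi-continuous, since $\|x^*\| = \sup\{|x^*(x)|:x\in B(X)\}$ is a supremum of $w^*$-continuous functions. The hypothesis that $Y$ be Chebyshev and not strongly proximinal is assumed in the statement.

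The substantive step is to verify the condition that for every $x^*\notin Y$, every minimizing sequence in $Y$ for $x^*$ has a subnet that $w^*$-converges to an element of $Y$. Here I would use: (i) a minimizing sequence is bounded (noted immediately after the definition of strong proximinality in the excerpt); (ii) the Banach--Alaoglu theorem, which gives the $w^*$-compactness of bounded sets of $X^*$; and (iii) the $w^*$-closedness of $Y$ in $X^*$ (given in the hypothesis). Together, a minimizing sequence $\{y_n\}\subset Y$, being norm-bounded, lies in a multiple of $B(X^*)$, which is $w^*$-compact; hence it has a subnet $\{y_{n_i}\}$ that $w^*$-converges to some $y'\in X^*$, and since $Y$ is $w^*$-closed, $y'\in Y$.

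With all the hypotheses of Theorem \ref{2.1} verified, the theorem directly yields some $z\in S(X^*)$ that fails to be a point of continuity of $id:(B(X^*),w^*)\longrightarrow (B(X^*),\|\cdot\|)$, which is exactly the conclusion of the corollary. There is no real obstacle here; the corollary is essentially a ``plug-in'' application of Theorem \ref{2.1}, and the only step requiring any observation beyond unwinding definitions is the use of Banach--Alaoglu together with the $w^*$-closedness of $Y$ to obtain the required $w^*$-convergent subnet inside $Y$.
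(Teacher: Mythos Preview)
Your proposal is correct and follows essentially the same approach as the paper: verify the hypotheses of Theorem~\ref{2.1} for $\tau=w^*$ on $X^*$ by invoking the $w^*$-lower semicontinuity of the dual norm, the boundedness of minimizing sequences, Banach--Alaoglu, and the $w^*$-closedness of $Y$. The paper's proof is just a terser version of exactly this argument.
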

\begin{proof}
The fact that $Y$ is $w^*$-closed, that every minimizing sequence is bounded, and the $w^*$-compactness of $B(Y)$, ensure that the hypothesis of Theorem \ref{2.1} are satisfied for $\tau$ the $w^\ast$-topology.
\end{proof}

\begin{thm}
 Let $X$ be an infinite-dimensional reflexive Banach space. Then, there is a reflexive Banach space $Z$ containing $X$ as a subspace of codimension $1$, such that $Z$ has a unit vector that is not even a point of sequential continuity of $id: (B(Z),w) \longrightarrow (B(Z),\|.\|)$.
\end{thm}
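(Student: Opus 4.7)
The plan is to construct $Z$ as a codimension-one reflexive extension of $X$ in which $X$ becomes a Chebyshev, not strongly proximinal subspace, and then to mimic (the proof of) Theorem~\ref{2.1}. Because $Z$ will be reflexive, any minimizing sequence in $X$ automatically has a weakly convergent subsequence rather than merely a subnet, which produces a \emph{sequentially} weak-to-norm discontinuous unit vector.

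For the construction, I would first pick a normalized weakly null sequence $\{u_n\}\subset S(X)$, which exists since $X$ is infinite-dimensional and reflexive. Writing $Z=X\oplus\R e_0$ as a vector space, I would define its norm by declaring its closed unit ball to be
\[
B(Z)\ :=\ \cco\bigl(B(X)\cup\{\epsilon u_n+\delta(1-1/n)e_0:n\in\N,\ \epsilon,\delta\in\{\pm 1\}\}\bigr),
\]
the closure being taken in the reflexive space $X\oplus_2\R$. Then $B(Z)$ is weakly compact (by Krein's theorem), its Minkowski functional is an equivalent norm on $Z$, and $Z$ is reflexive. To verify that $X$ embeds isometrically with codimension one I would extend each $\ell\in S(X^*)$ to $\tilde\ell(x,t):=\ell(x)\in Z^*$ and observe $|\tilde\ell|\le\max(1,\sup_n|\ell(u_n)|)=1$ on every generator, forcing $B(Z)\cap X=B(X)$. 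Setting $\phi(x,t):=t$, so $X=\ker\phi$, the averaging identity $(1-1/n)e_0=\tfrac12\bigl((u_n+(1-1/n)e_0)+(-u_n+(1-1/n)e_0)\bigr)$ places $e_0$ in $B(Z)$, so $\|\phi\|_{Z^*}=1$ is attained at $e_0$.

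The Chebyshev property would follow from Milman's theorem: the extreme points of the weakly compact $B(Z)$ lie in the weak closure of the generating set, and on that closure $\phi$ attains its maximum value $1$ solely at $e_0$; Krein--Milman then collapses the norming face $\{z\in B(Z):\phi(z)=1\}$ to $\{e_0\}$. For failure of strong proximinality, the inclusion $u_n+(1-1/n)e_0\in B(Z)$ yields $\|e_0+u_n\|_Z\le 1+1/n$; combined with $\|e_0+u_n\|_Z\ge\phi(e_0+u_n)=1$, this gives $\|e_0-(-u_n)\|_Z\to 1=d(e_0,X)$, so $\{-u_n\}\subset X$ is a minimizing sequence for $e_0$ although $\|-u_n-P_X(e_0)\|_Z=\|u_n\|_X=1\not\to 0$. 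Finally, $w_n:=(e_0+u_n)/\|e_0+u_n\|_Z\in S(Z)$ converges weakly to $e_0$ (by the weak nullity of $\{u_n\}$ and $\|e_0+u_n\|_Z\to 1$) while $\|w_n-e_0\|_Z\to 1$, showing that $e_0\in S(Z)$ fails to be a point of sequential continuity of $id:(B(Z),w)\lgra(B(Z),\nn)$. The delicate step is the uniqueness of $e_0$ as the norming point of $\phi$ --- Milman's theorem applied to the explicit generating set of $B(Z)$ is the key tool there; the remaining verifications are straightforward computations.
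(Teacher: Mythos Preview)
Your argument is correct and follows the same overall strategy as the paper: produce a reflexive codimension-one superspace $Z$ of $X$ in which $X$ is Chebyshev but not strongly proximinal, and then run (the proof of) Theorem~\ref{2.1}, noting that reflexivity upgrades the subnet to a subsequence via Eberlein--\v Smulyan. The only difference is that the paper outsources the construction of $Z$ to \cite[Theorem~2.2]{N}, whereas you build $Z$ explicitly by declaring $B(Z)=\cco\bigl(B(X)\cup\{\pm u_n\pm(1-1/n)e_0\}\bigr)$ for a normalized weakly null sequence $\{u_n\}$, and then verify the Chebyshev property via Milman's converse and the failure of strong proximinality directly from the generators. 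This is, in essence, Narayana's construction (or a minor variant of it) written out in full; what you gain is self-containment, what you lose is brevity. Your ``delicate step'' ---that $e_0$ is the \emph{unique} point of $B(Z)$ on which $\phi$ attains the value $1$--- is handled correctly: Milman locates $\Ext B(Z)$ inside the weak closure of the generators, where $\phi=1$ occurs only at $e_0$, and then Krein--Milman applied to the exposed face $\{\phi=1\}$ (a weakly compact face whose extreme points are extreme in $B(Z)$) forces that face to be $\{e_0\}$.
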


\begin{proof}
By \cite[Theorem 2.2]{N}, there exists a Banach space $Z\supset X$ such that $X$ is of codimension $1$ in $Z$ (hence $Z$ is also reflexive), and $X$ is a Chebyshev not strongly proximinal subspace of $Z$. The rest is a consequence of Theorem \ref{2.1}, and the sequential continuity assertion follows just from the reflexivity of $Z$ and the Eberlein--\v Smulyan theorem.
\end{proof}

\section{Points of continuity in higher duals (I)}\label{sect-higher}

{\bf  In this and the following sections, we will work on nonreflexive Banach space.}

\subsection{Extreme points that are of continuity}\label{subsec-extreme}

An open problem in this theory is to find conditions under which the existence of a point of continuity of $id: (B(X^{*}),w^*)\longrightarrow (B(X^{*}),w)$ implies the existence of {\em extreme} points that are also points of continuity of $id: (B(X^{*}),w^*)\longrightarrow (B(X^{*}),w)$. This question will be considered in Section \ref{sect-g-delta}.

\medskip

We record the following simple lemma for later reference. Here, $NA(X^{**})$ denotes the set of elements in $X^{**}$ that attain their norm on $S(X^*)$:
\begin{lem}\label{lemma-preserved}
Let $X$ be a Banach space, and let $x^{**}\in B(X^{**})$ be a point of continuity of $id: (B(X^{**}),w^*)\longrightarrow (B(X^{**}),w)$. Then
\begin{enumerate}
\item $x^{**}\in S(X)$.
\item $x^{**}\in\Ext B(X^{**})$ if and only if $x^{**}\in\Ext B(X)$.
\item $x^{**}\in NA(X^{**})$.
\end{enumerate}
\end{lem}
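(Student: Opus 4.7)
The plan is to handle the three claims in order, each leveraging a previous result in the section.

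For (1), I would combine Goldstine's theorem with Mazur's theorem. The hypothesis that $X$ is nonreflexive (standing assumption of this section) means $X^*$ is also nonreflexive, so Lemma~\ref{lemma-sphere} applied to $X^*$ (whose dual is $X^{**}$) yields $\|x^{**}\|=1$. To locate $x^{**}$ inside $X$, pick by Goldstine's theorem a net $\{x_\alpha\}\subset B(X)$ with $x_\alpha \xrightarrow{w^*} x^{**}$ in $X^{**}$. The continuity assumption at $x^{**}$ gives $x_\alpha \xrightarrow{w} x^{**}$ in $X^{**}$. Since $X$ is norm closed in $X^{**}$, Mazur's theorem ensures $X$ is also weakly closed in $X^{**}$, whence $x^{**}\in X$. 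Combined with $\|x^{**}\|=1$, this gives $x^{**}\in S(X)$.

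For (2), the forward direction is trivial: any convex decomposition $x^{**}=\tfrac12(x+y)$ with $x,y\in B(X)\subset B(X^{**})$ collapses by extremality of $x^{**}$ in $B(X^{**})$. For the reverse direction, suppose $x^{**}\in\Ext B(X)$ and that $x^{**}=\tfrac12(y^{**}+z^{**})$ with $y^{**},z^{**}\in B(X^{**})$. Apply Lemma~\ref{l22} to the Banach space $X^*$ (so the ``dual'' there is $X^{**}$ with its $w^*$-topology, and the target ``weak'' topology is the weak topology of $X^{**}$): both $y^{**}$ and $z^{**}$ are then points of $w^*$-$w$-continuity in $B(X^{**})$. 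Part~(1) then forces $y^{**},z^{**}\in B(X)$, and extremality of $x^{**}$ in $B(X)$ yields $y^{**}=z^{**}=x^{**}$.

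For (3), by (1) we may regard $x^{**}$ as an element of $S(X)$. The Hahn--Banach theorem furnishes some $x^*\in S(X^*)$ with $x^*(x^{**})=\|x^{**}\|=1$; under the canonical identification, this says exactly that $x^{**}$ attains its norm on $S(X^*)$, i.e.\ $x^{**}\in NA(X^{**})$.

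I do not expect any genuine obstacle: all three parts reduce to reading the earlier lemmas with $X$ replaced by $X^*$. The only point that deserves care is justifying the substitution in Lemmas~\ref{lemma-sphere} and~\ref{l22}, since those statements are phrased for a Banach space $X$ rather than for $X^*$, but both proofs use only the general $w^*$-$w$-continuity framework and the nonreflexivity hypothesis, which transfer to $X^*$ automatically.
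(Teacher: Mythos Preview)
Your proof is correct and follows the same route as the paper: Goldstine plus Lemma~\ref{lemma-sphere} for (1), Lemma~\ref{l22} for the nontrivial implication in (2), and (1) alone for (3). You spell out a few details the paper leaves implicit (Mazur for the weak closedness of $X$ in $X^{**}$, and the explicit use of (1) inside the argument for (2) to pull $y^{**},z^{**}$ back into $B(X)$), but the overall strategy is identical.
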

\begin{proof}
Item (1) is a consequence of the $w^*$-density of $B(X)$ in $B(X^{**})$ and Lemma \ref{lemma-sphere} above. For Item (2), observe that an extreme point of $B(X^{**})$ that lies in $X$ is an extreme point of $B(X)$. In general, the converse is not true, see Remark \ref{r2} below; however, in case that $x\in\Ext B(X)$ is of continuity of  $id: (B(X^{**}),w^*)\longrightarrow (B(X^{**}),w)$, it is also in $\Ext(B_{X^{**}})$, a consequence of Lemma \ref{l22} above. Item (3) follows from (1).
\end{proof}

\begin{rem}\label{r2}
\rm It is worth to mention that $\Ext B(X^{**})=\Ext B(X)$ if and only if the space $X$ is reflexive, a consequence of James' characterization of reflexivity. Extreme points of $B(X)$ that are also extreme points of $B(X^{**})$ are called {\bf preserved extreme} (see, e.g., \cite{gmz} for an account of such a concept).
\end{rem}

Let $X$ be a Banach space such that	$X^{\ast\ast}/X$ is a reflexive space. Spaces with this property are analogues of quasi-reflexive spaces (Civin and Yood, \cite{civin-yood}). We recall that $X$ is said to be a {\bf quasi-reflexive space} if $X^{\ast\ast}/X$ is a finite-dimensional space.

\medskip

\begin{prop}
Let $X$ be a Banach space such that $X^{\ast\ast}/X$ is reflexive. Let $x^\ast \in S(X^\ast)$ be a point of continuity for $id: (B(X^\ast),w^\ast) \longrightarrow (B(X^\ast),w)$. Then it continues to be a point of $w^\ast$-$w$-continuity of the unit balls of all odd higher duals of $X$. In particular, if $x^*$ is also an extreme point, then it is a  preserved extreme point with respect to those odd higher duals.
\end{prop}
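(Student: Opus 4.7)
The plan is to use Lemma \ref{lemma-godefroy} as the main tool together with a short induction on the odd duals. Denote by $\iota:X^{*}\hookrightarrow X^{***}$ the canonical embedding. At the first stage, our hypothesis (via Lemma \ref{lemma-godefroy}) is that $x^{*}$ has a unique norm-preserving extension from $X$ to $X^{**}$; the key step to verify is that $\iota(x^{*})\in S(X^{***})$ is itself a $w^{*}$-$w$ continuity point of $B(X^{***})$, after which the same argument can be iterated with $X^{**}$ in place of $X$. To run the iteration, I first check that the hypothesis propagates: dualizing the short exact sequence $0\to X\to X^{**}\to X^{**}/X\to 0$ identifies $X^{\perp}$ with $(X^{**}/X)^{*}$, and dualizing again yields $X^{(IV)}/X^{**}\cong(X^{\perp})^{*}\cong(X^{**}/X)^{**}=X^{**}/X$, where the last equality uses the reflexivity hypothesis. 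Hence $X^{(2n+2)}/X^{(2n)}$ is reflexive for every $n\ge 0$.

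For the main step, take a net $\{\Phi_{\alpha}\}\subset B(X^{***})$ with $\Phi_{\alpha}\xrightarrow{w^{*}}\iota(x^{*})$ and use the canonical direct-sum decomposition $X^{***}=\iota(X^{*})\oplus X^{\perp}$ to write $\Phi_{\alpha}=\iota(y_{\alpha}^{*})+z_{\alpha}^{*}$, where $y_{\alpha}^{*}:=\Phi_{\alpha}|_{X}\in B(X^{*})$ and $z_{\alpha}^{*}\in X^{\perp}$ (with $\|z_{\alpha}^{*}\|\le 2$). Restricting the $w^{*}$-convergence to $X$ yields $y_{\alpha}^{*}\xrightarrow{w^{*}}x^{*}$ in $X^{*}$; by the hypothesis on $x^{*}$ this upgrades to $y_{\alpha}^{*}\xrightarrow{w}x^{*}$, and since the bounded linear map $\iota$ is weak-to-weak continuous, $\iota(y_{\alpha}^{*})\xrightarrow{w}\iota(x^{*})$ in $X^{***}$. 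Subtracting gives $z_{\alpha}^{*}\xrightarrow{w^{*}}0$. The crucial observation is that the $w^{*}$-topology of $X^{***}$ restricted to $X^{\perp}$ coincides with the intrinsic $w^{*}$-topology of $X^{\perp}$ seen as $(X^{**}/X)^{*}$ (both are generated by evaluations against elements of $X^{**}$, which for functionals in $X^{\perp}$ factor through $X^{**}/X$); by reflexivity of $X^{\perp}$ this coincides with the weak topology of $X^{\perp}$, which by Hahn--Banach is the trace on $X^{\perp}$ of the weak topology of $X^{***}$. Therefore $z_{\alpha}^{*}\xrightarrow{w}0$ in $X^{***}$, and $\Phi_{\alpha}\xrightarrow{w}\iota(x^{*})$, completing the main step.

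The iteration now runs through cleanly: applying the same argument to $X^{**}$ in place of $X$ (legitimate because $X^{(IV)}/X^{**}$ is reflexive by the duality computation), the canonical image of $x^{*}$ in $X^{(V)}$ is a $w^{*}$-$w$ continuity point of $B(X^{(V)})$, and so on for every odd higher dual. The preserved-extreme assertion is then an immediate level-by-level consequence of Lemma \ref{lemma-preserved}(2): at each step, a point of the sphere that is extreme in the dual and a $w^{*}$-$w$ continuity point of the next odd dual remains extreme in that next odd dual. The only mildly delicate place in the proof is the identification of the various $w^{*}$ and $w$ topologies on $X^{\perp}$; once that is settled, the rest is bookkeeping with the direct-sum decomposition $X^{(2n+1)}=\iota(X^{(2n-1)})\oplus(X^{(2n-2)})^{\perp}$.
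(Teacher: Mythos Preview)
Your proof is correct and follows essentially the same route as the paper's: decompose $X^{***}=X^{*}\oplus X^{\perp}$, use the $w^{*}$-$w$ continuity hypothesis on the $X^{*}$-component and the reflexivity of $X^{\perp}\cong(X^{**}/X)^{*}$ on the other component, then iterate via $X^{(IV)}/X^{**}$ reflexive. The only cosmetic remark is that your announced ``main tool'' Lemma~\ref{lemma-godefroy} plays no role in the actual argument---you work directly with nets, just as the paper does---so that opening sentence could be dropped.
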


\begin{proof} As we mentioned several times, $X^{***}=X^*\oplus X^{\perp}$. Notice that the topology $w$ in $X^{***}$ induces the topology $w$ on $X^*$. Let $Q: X^{\ast\ast\ast}\rightarrow X^\ast$ be the canonical projection associated to this decomposition. Let $\{\tau_{\alpha}\} \subset S(X^{\ast\ast\ast})$ be a net such that $\tau_{\alpha}\rightarrow x^\ast$ in $(B(X^{\ast\ast\ast}),w^*)$. Hence $Q(\tau_{\alpha}) \rightarrow x^\ast$ in $(B(X^\ast),w^*)$. Since $x^*$ is a point of continuity for $id: (B(X^\ast),w^\ast) \longrightarrow (B(X^\ast),w)$ we have $Q(\tau_{\alpha}) \rightarrow x^\ast$ in $(B(X^\ast),w)$ (and so $Q(\tau_\aa)\rightarrow x^*$ in $(B(X^{***}),w)$).

We get, in particular, $(X^{\perp}\ni)\ \tau_{\alpha}-Q(\tau_{\alpha}) \rightarrow 0$ in $(X^{\ast\ast\ast},w^*)$. Observe that $(X^{**}/X)^*$ is isomorphic to $X^{\perp}$, hence $X^{\perp}$ is also reflexive. Thus, $\tau_{\alpha} - Q(\tau_{\alpha}) \rightarrow 0$ in $(X^{\ast\ast\ast},w)$. Therefore $\tau_{\alpha} \rightarrow x^\ast$ in $(B(X^{\ast\ast\ast}),w)$ and so $x^*$ is of continuity of $id:(B(X^{***}),w^*)\longrightarrow (B(X^{***}),w)$.

Since $X^{***}=X^*\oplus X^{\perp}$, the space $X^{***}/X^*$ is reflexive, too, since it is isomorphic to $X^{\perp}$. Now we consider the duality $\langle X^{(IV)},X^{***}\rangle$. The dual space of $X^{***}/X^*$ is $(X^*)^{\perp}$, so this last space is also reflexive. Since $X^{(IV)}=X^{**}\oplus (X^*)^{\perp}$, and then $X^{(IV)}/X^{**}$ is isomorphic to $(X^*)^{\perp}$, it follows that $X^{(IV)}/X^{**}$ is reflexive and we can apply iteratively what has already been proved to reach the conclusion.

The assertion about extreme points follows from the first part of the statement and Lemma \ref{lemma-preserved}.
\end{proof}

We shall resume the study of extreme points of continuity in Section \ref{sect-extreme-2} below.

\subsection{Continuity in higher duals} Related to Lemma \ref{lemma-preserved}, let us mention that we don't know of any geometric condition on $x\in S(X)$ in terms of $X$ alone for ensuring that $x$ should be a point of continuity of $id: (B(X^{**}),w^*)\longrightarrow (B(X^{**}),w)$.

We refer the reader to what was said at the beginning of Section \ref{sect-intro-bis} above regarding the bad stability behaviour ---out of some classes of Banach spaces, see Section \ref{sect-m-hbs} for some instances---of points of $w^*$-$w$-continuity when going to higher duals and, viceversa, ``coming down'', and the appropriate examples mentioned there.

\section{M-embedded and Hahn--Banach smooth spaces}\label{sect-m-hbs}
Some properties of a Banach space remain when going to higher duals. For example, if $\mathcal{A}$ is a $C^*$-algebra, then $\mathcal{A}^{**}$ is again a $C^*$-algebra. Similarly, if $X$ is an abstract $L$-space or $M$-space, then their biduals are of the same type (see, e.g.,  \cite{L}). Thus it is an interesting question to see how some of the geometric properties of a Banach space studied here can be carried to its higher duals.

\medskip

Theorem \ref{thm-hb-smooth} above says, in particular, that for a Hahn--Banach smooth Banach space $X$, every $x^*\in S(X^*)$ is a point of continuity of $id:(B(X^*),w^*)\longrightarrow (B(X^*),w)$. We shall show in Theorem \ref{t31} that, under the slightly stronger assumption of $M$-embeddability (see Remark \ref{rem-m-implies-hbs} below), it is even of continuity for $id:(B(X^{***}),w^*)\longrightarrow (B(X^{***}),w)$.

\begin{rem}\label{rem-m-implies-hbs}\rm
{\em Every $M$-embedded Banach space is Hahn--Banach smooth}. More generally, if $M$ is an $M$-ideal in a Banach space $Y$, then every $m^*\in M^*$ has a unique norm-preserving extension to a functional $y^*\in Y^*$ (see, e.g., \cite[Proposition 1.12]{HWW}). As a consequence, $M^*$ can be viewed as a subspace of $Y^*$ and $Y^*= M^* \oplus_1 M^\bot$(see, e.g.,  \cite[Remark I.1.13]{HWW}).
\end{rem}

\begin{thm}\label{t31}
Let $X$ be a Banach space, and let $M\subset X$ be an $M$-ideal in $X$. Then every $m^*\in S(M^*)$ that is a point of continuity  for $id:(B(M^*),w^*)\longrightarrow (B(M^*),w)$, is a point of continuity for  $id:(B(X^*),w^*)\longrightarrow (B(X^*),w)$. In particular, if $X$ is an $M$-embedded space, then the set of points of continuity for $id:(B(X^{***}),w^*)\longrightarrow (B(X^{***}),w)$ is precisely $S(X^*)$.
\end{thm}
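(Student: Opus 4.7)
The plan is to first settle the general $M$-ideal statement via a net argument inside the $L$-decomposition $X^* = M^* \oplus_1 M^\perp$ (from Remark \ref{rem-m-implies-hbs}, with $M^*$ identified with its unique-extension copy in $X^*$), and then handle the $M$-embedded specialization by combining this with Theorem \ref{thm-hb-smooth} and a Goldstine/Hahn--Banach separation argument in $X^{(IV)}$. Throughout, let $\tilde m^* \in S(X^*)$ denote the unique norm-preserving extension of $m^* \in S(M^*)$.

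For the first assertion, take a net $\{x_\alpha^*\} \subset B(X^*)$ with $x_\alpha^* \stackrel{w^*}{\longrightarrow} \tilde m^*$. Decompose $x_\alpha^* = f_\alpha + g_\alpha$ with $f_\alpha \in M^*$ and $g_\alpha \in M^\perp$; then $\|f_\alpha\| + \|g_\alpha\| \leq 1$. Restriction to $M$ gives $f_\alpha|_M = x_\alpha^*|_M \stackrel{w^*}{\longrightarrow} m^*$ in $M^*$, and the hypothesis upgrades this to $f_\alpha|_M \stackrel{w}{\longrightarrow} m^*$. Since the weak topology of $X^*$ restricts to the weak topology of $M^*$ on the closed subspace $M^* \subseteq X^*$ (a Hahn--Banach consequence, as every element of $M^{**}$ extends to $X^{**}$), this is the same as $f_\alpha \stackrel{w}{\longrightarrow} \tilde m^*$ in $X^*$. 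Simultaneously, $w^*$-lower semicontinuity of the norm gives $\liminf \|f_\alpha\| = \liminf \|f_\alpha|_M\| \geq \|m^*\| = 1$, and combining with $\|f_\alpha\| + \|g_\alpha\| \leq 1$ forces $\|g_\alpha\| \to 0$. Adding the two yields $x_\alpha^* \stackrel{w}{\longrightarrow} \tilde m^*$, as desired.

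For the $M$-embedded specialization, the inclusion $S(X^*) \subseteq \{\text{points of continuity}\}$ in $S(X^{***})$ follows from the first part applied with $M = X \subset X^{**}$, combined with Theorem \ref{thm-hb-smooth} which ensures (via Remark \ref{rem-m-implies-hbs}) that every $x^* \in S(X^*)$ is $w^*$-$w$ continuous already on $B(X^*)$. The reverse inclusion is the step I expect to be the main obstacle, and I would argue by contradiction. Suppose $\tau \in S(X^{***})$ is a point of continuity with $\tau = f + \eta$ in $X^{***} = X^* \oplus_1 X^\perp$ and $\eta \neq 0$. Dualizing the $L$-sum yields $X^{(IV)} = X^{**} \oplus_\infty (X^\perp)^*$. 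Pick $\sigma \in (X^\perp)^*$ with $\sigma(\eta) \neq 0$ (Hahn--Banach), and let $\xi \in X^{(IV)}$ be the element with $X^{**}$-component $0$ and $(X^\perp)^*$-component $\sigma$. By Goldstine there is a net $\{x_\alpha^*\} \subset B(X^*)$ with $x_\alpha^* \stackrel{w^*}{\longrightarrow} \tau$ in $X^{***}$. Each $x_\alpha^*$ has zero $X^\perp$-component in the $L$-decomposition, so $\xi(x_\alpha^*) = 0$, whereas $\xi(\tau) = \sigma(\eta) \neq 0$, contradicting the weak convergence forced by continuity. The delicate point is verifying that this $\xi$ sits outside the canonical image of $X^{**}$ inside $X^{(IV)}$ (otherwise $w^*$-convergence would already test it); but that embedding sends $x^{**}$ to the functional with $X^{**}$-part $x^{**}$ and $(X^\perp)^*$-part $\eta' \mapsto \eta'(x^{**})$, so zero $X^{**}$-part forces $x^{**}=0$ and hence $\sigma = 0$, contradicting our choice.
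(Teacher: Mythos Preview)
Your argument for the first assertion is essentially identical to the paper's: decompose along the $L$-sum, use $w^*$-continuity of restriction to $M$ to trigger the hypothesis, then kill the $M^\perp$-component via the $\ell_1$-norm and $w^*$-lower semicontinuity. The forward inclusion in the $M$-embedded case is also handled the same way.

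For the reverse inclusion you take a genuinely longer route than the paper. The paper simply invokes Lemma~\ref{lemma-preserved}(1) (applied with $X^*$ in place of $X$): a Goldstine net in $B(X^*)$ $w^*$-converges to the continuity point $\tau$, hence $w$-converges, and since $X^*$ is norm-closed (so weakly closed) in $X^{***}$, one gets $\tau\in S(X^*)$ in one line. Your argument instead dualizes the $L$-decomposition to $X^{(IV)}=X^{**}\oplus_\infty (X^\perp)^*$, builds a separating functional $\xi=(0,\sigma)$, and then must verify that $\xi$ is not in the canonical copy of $X^{**}$ to avoid contradicting Goldstine itself. This is correct and the verification you give is fine, but it is doing by hand what the weak closedness of $X^*$ gives for free; the extra decomposition of $X^{(IV)}$ and the ``delicate point'' are not needed.
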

\begin{proof}
As we mentioned in Remark \ref{rem-m-implies-hbs} above, $M^*$ can be identified with a subspace of $X^*$, and so the conclusion makes sense. Let $Q:X^*\longrightarrow X^*$ be an $L$-projection with range $M^{\perp}$. Put $P:=(I-Q)$ so that $range(P)=M^*$. Let $(x^*_i)$ be a net in $B(X^*)$ that $w^*$-converges to $m^*$. Then, $P(x^*_i)\stackrel{w^*}{\rightarrow }P(m^*)\ (=m^*)$.  Thus, by hypothesis, $P(x^*_i)\stackrel{w}\rightarrow P(m^*)\ (=m^*)$ weakly. Notice that $x^*_i=P(x^*_i)+(I-P)(x^*_i)$, and that $\|P(x^*_i)\|\rightarrow 1$. The fact that $P$ is an $L$-projection with range $M^*$ and kernel $M^{\perp}$ shows that $\|(I-P)(x^*_i)\|\rightarrow 0$. The conclusion follows.

For the particular case, apply the result to the space $X^{**}$ and its $M$-embedded subspace $X$ to get that every $x^*\in S(X^*)$ is of continuity for $id:(B(X^{***}),w^*)\longrightarrow (B(X^{***}),w)$. In the other direction, every point of continuity for $id:(B(X^{***}),w^*)\longrightarrow (B(X^{***}),w)$ is in $S(X^*)$, according to Lemma \ref{lemma-preserved} above.
\end{proof}

As an application, we have the following
\begin{ex}\rm
    Let $1<p<\infty$ and $q$ be such that $\frac{1}{p}+\frac{1}{q}=1$. It is known that ${\mathcal K}(\ell^p)^*=\ell^p\hat{\otimes}_{\pi}\ell^q$, where $\ell^p\hat{\otimes}_{\pi}\ell^q$ is projective tensor product of $\ell^p$ and $\ell^q$.  It is also well known that ${\mathcal K}(\ell^p)^{**}={\mathcal L}(\ell^p)$. One also has that ${\mathcal K}(\ell^p)$ is an $M$-ideal in its bidual (see \cite[Example.III.1.4]{HWW}). Thus, from Theorem~{\rm{\ref{t31}}} we get that all points of $S({\mathcal K}(\ell^p)^*)=S(\ell^p\hat{\otimes}_{\pi}\ell^q)$ are points of $w^*$-$w$-continuity of $B({\mathcal L}(\ell^p)^*)$.
\end{ex}

\begin{rem}\rm Theorem \ref{t31}, together with Lemma \ref{lemma-godefroy}, provide an alternative proof of the following result \cite[Proposition 3.2]{GI}: {\em Let $X$ be an $M$-embedded Banach space. If the norm is G\^ateaux smooth in $x\in S(X)$, then $x$ is a very smooth point of $X^{**}$}. For the concept of very smoothness, see the beginning of Section \ref{sect-higher-2} below.
\end{rem}

The rest of this section will address the case where $x\in S(X)$ is a point of continuity of $id: (B(X^{**}),w^*)\longrightarrow (B(X^{**}),w)$ (see Lemma \ref{lemma-preserved} above).
Let $Y$ be another Banach space such that $\Phi: X^{*}\longrightarrow Y^{*}$ is a surjective linear isometry. Then $\Phi^{*}: Y^{**}\longrightarrow X^{**}$ is a linear surjective isometry that is also an isomorphism for the $w^*$-topologies. It is easy to see that the set of points of $w^*$-$w$-continuity of the identity in $B(X^{**})\ (=B(Y^{**}))$ does not depend on the underlying $X$ or $Y$ space. In fact, this set lies in $X\cap Y$, where both $X$ and $Y$ are naturally seen as subspaces of $X^{**}$.

In several cases where this idea is applied in this paper, we have that $X^*$ has the RNP (the Radon--Nikod\'ym property, see \cite{DU}). For example, if the space $X$ is Hahn-Banach smooth (in particular, if it is $M$-embedded), then $X$ is Asplund (and so $X^*$ has the RNP). In fact, we have more: Since the $w^*$ and the $w$ topology coincide on $S(X^*)$, then $X^*$ admits an equivalent dual LUR norm, and so $X$ has an equivalent Fr\'echet differentiable norm \cite{raja}. Then we may apply \cite[Theorem~5.7.6]{Bo} to conclude that $X^*$ is the unique predual of $X^{**}$, and that there is a unique norm-one projection of $X^{(IV)}$ onto $X^{**}$. Thus, if $Y$ is a Banach space such that $Y^{**}=X^{**}$, then there is a surjective isometry $\Phi:Y^*\to X^*$. In this case, the weak$^*$-topology of $X^{**}$ is uniquely determined. In this direction, we consider the following example:
\begin{ex}\label{ex-c}\rm
Let $c$ be the space of convergent sequences, endowed with the supremum norm $\nn_\infty$. Notice that $c^*$ is linearly isometric to $\ell_1$ via the mapping $S: c^\ast  \longrightarrow \ell^1$ defined by $S(\alpha) = (\alpha({\bf 1})-\sum_1^\infty \alpha(e_n), \alpha(e_1),\alpha(e_2),...)$ for $\aa\in c^*$, where the $e_i$'s are the elements in $c$ with $1$ at the $i$th place and $0$ elsewhere, and ${\bf 1}$ is the constant-$1$ sequence (see, e.g., \cite[Exercise 2.31]{y2}).

Consider $(c,\nn_{\infty})$. Certainly,  $c_0$ is a closed subspace. Remark \ref{r2} says, in particular, that every point $x\in B(c^{**})$ of continuity for $id:B(c^{**},w^*)\longrightarrow B(c^{**},w)$ is in $c$. However, not every point of $c$ is such. In fact, no point in $c\setminus c_0$ is of continuity.

In order to see this, let $T:=S^{-1}:\ell_1\longrightarrow c^*$ (a linear isometry onto). Then $T^*:c^{**}\longrightarrow \ell_{\infty}$ is also a linear isometry onto. We shall translate the situation in $c^{**}$ via $T^*$ to the one in $\ell_{\infty}$ (by the way, as it is clear, $w(c^{**},c^*)$ corresponds via $T^*$ to $w(\ell_{\infty},\ell_1)$, and $w(c^{**},c^{***})$ to $w(\ell_{\infty},\ell_{\infty}^*))$. It is easy to compute $T^*(x)$, where $x=(x_n)\in c\ (\subset c^{**})$. We get $T^*(x)=(l,x_1,x_2,\ldots)\ (\subset \ell_{\infty})$, where $l:=\lim_nx_n$. Let $x=(x_n)\in c\setminus c_0$. Then $u:=T^*(x)=(l,x_1,x_2,\ldots)$, where $l\not=0$. Put $u_n:=(l,x_1,x_2,\ldots,x_n,0,0,\ldots)\ (\in c_0\subset \ell_{\infty})$ for $\nin$. Then, the sequence $\{u_n\}$ in $B(\ell_\infty)$ is $w(\ell_{\infty},\ell_1)$-convergent to $u$, due to the fact that on $B(\ell_{\infty})$ the topology $w(\ell_{\infty},\ell_1)$ and the one of coordinate-wise convergence coincide. Due to the fact that $(c_0)^{\perp\perp}=c_0$ in the duality $\langle \ell_{\infty},\ell_{\infty}^*\rangle$, there exists $u^*\in c_0^{\perp}\subset \ell_{\infty}^*$ such that $\langle u,u^*\rangle\not=0$. Thus, $(0=)\ \langle u_n,u^*\rangle\not\rightarrow \langle u,u^*\rangle\ (\not=0)$. This shows that $\{u_n\}\not\rightarrow u$ in the topology $w(\ell_{\infty},\ell_{\infty}^*)$. Since $u=T^*(x)$ for $x\in c\setminus c_0$, it follows that $x$ is not a point of $w^*$-$w$-continuity for the identity mapping $id:(B(c^{**}),w^*)\longrightarrow (B(c^{**}),w)$, as announced.
\end{ex}
It is known that $\ell^1$ is the unique predual of $\ell^\infty$ and that $c_0$ is an $M$-embedded space (in $\ell^{\infty}$) (see, e.g., \cite[Examples 1.4 (a)]{HWW}). The concrete example above (Example \ref{ex-c}) has a very general geometric analogue.  We need the  following definition (see, e.g., \cite{Go87}): A Banach space $X$ is said to be a {\bf strongly unique predual} (of $X^*$) if, for every Banach space $Y$, every isometric isomorphism from $X^*$ onto $Y^*$ is $w^*$-continuous (and so it is the adjoint of an isometric isomorphism from $Y$ onto $X$). This is an (apparently) strengthening of the notion of a {\bf unique predual} (if $Y$ is a Banach space such that $Y^*$ and $X^*$ are isometrically isomorphic, then $X$ and $Y$ are isometrically isomorphic, too; it seems that the problem of the coincidence of both notions is still open). Let us recall the following result:

\begin{prop}\label{prop-strong-1}\cite[Proposition III.2.10]{HWW}
Let $X$ be an $M$-embedded nonreflexive Banach space. Then, (i) $X^*$ is a strongly unique predual of $X^{**}$, and (ii) $X$ is not a strongly unique predual of $X^*$.
\end{prop}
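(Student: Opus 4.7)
The plan is to tackle the two statements separately, leaning on the Fr\'echet smooth renorming machinery already assembled in the paragraph preceding Example~\ref{ex-c}.

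For part (i), I would first trace the chain of implications: $M$-embeddedness gives Hahn--Banach smoothness (Remark~\ref{rem-m-implies-hbs}); this makes $X$ Asplund and $X^{*}$ RNP, so $X^{*}$ admits an equivalent dual LUR norm and $X$ an equivalent Fr\'echet differentiable norm. At that point I would invoke the cited Theorem~5.7.6 of [Bo] in its full strength: it delivers not only the uniqueness of $X^{*}$ as an isometric predual of $X^{**}$, but also the uniqueness of the norm-one projection $X^{(IV)}\to X^{**}$. That projection is exactly the canonical $M$-projection obtained by dualizing the $L$-decomposition $X^{***}=X^{*}\oplus_{1}X^{\perp}$, i.e., the $M$-summand structure $X^{(IV)}=X^{**}\oplus_{\infty}(X^{*})^{\perp}$. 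Given any isometric iso $\Phi:X^{**}\to Y^{*}$, my plan is to transport the canonical restriction projection $\rho_{Y}:Y^{***}\to Y^{*}$ back to a norm-one projection on $X^{(IV)}$ via the adjoints $\Phi^{**}$ and $\Phi^{-1}$, observing that the composition is still a norm-one projection onto $X^{**}$; by the uniqueness just recalled, it must coincide with the canonical $M$-projection. Matching the kernels forces $\Phi^{*}(j_{Y}(Y))=X^{*}$ as subsets of $X^{***}$, which is precisely the statement that $\Phi$ is the adjoint of an isometric iso $Y\to X^{*}$, hence $w^{*}$-continuous.

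For part (ii), the goal is an isometric automorphism $\psi$ of $X^{*}$ that is not $w^{*}$-continuous, equivalently whose adjoint $\psi^{*}:X^{**}\to X^{**}$ fails to preserve the canonical copy of $X$. The model to imitate is Example~\ref{ex-c}: there $c$ and $c_{0}$ are distinct isometric preduals of $\ell^{1}$ sitting inside $\ell^{\infty}$, so the restriction-comparison isomorphism between them is an isometric automorphism of $\ell^{1}$ which is not $w^{*}$-continuous with respect to $c_{0}$. Nonreflexivity of $X$ forces $X\subsetneq X^{**}$ and $X^{\perp}\neq\{0\}$ in $X^{***}$; my plan is to construct, exploiting these two facts and the $L$-summand structure, a second isometric predual $Y\subset X^{**}$ of $X^{*}$ distinct from $X$, obtained by twisting the canonical inclusion along a nonzero direction in $X^{**}\setminus X$ (equivalently, along a nonzero element of $X^{\perp}$). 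The comparison $X^{*}\to Y^{*}\to X^{*}$ is then the desired non-$w^{*}$-continuous isometric automorphism.

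The genuine obstacle lies in (i): promoting uniqueness of the predual to strong uniqueness is not formal and rests on the uniqueness of the norm-one projection $X^{(IV)}\to X^{**}$, which in turn is what the $M$-summand decomposition delivers. The payoff is that this projection pins down the $w^{*}$-topology of $X^{**}$ uniquely, so any isometric iso onto some other $Y^{*}$ is automatically compatible with it. Part (ii) is conceptually cleaner once the twisted embedding has been identified, and the delicate point there is merely carrying out in the abstract setting the concrete perturbation that Example~\ref{ex-c} exhibits for $c_{0}$.
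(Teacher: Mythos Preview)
The paper does not supply its own proof of Proposition~\ref{prop-strong-1}; the result is quoted from \cite[Proposition~III.2.10]{HWW}. The only fragment of the original argument that appears in the paper is the construction behind part~(ii), recalled in step~(c) of the proof of Theorem~\ref{t3.5}.

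Your plan for~(ii) matches that construction: one tilts $X^{\perp}$ to a $w^*$-closed subspace $Z=H\oplus\mathbb{R}(y^{***}+sp_0^*)$ (with $H$ a $w^*$-closed hyperplane of $X^{\perp}$, $y^{***}\in S(X^{\perp})$, $p_0^*\in S(X^*)$, $0<s\le1$) and takes $Z^{\perp}\subset X^{**}$ as the new predual. What you leave unsaid is the one non-formal step: why the projection of $X^{***}$ onto $X^*$ along $Z$ remains a \emph{contraction}, so that $Z^{\perp}$ is an \emph{isometric} (not merely isomorphic) predual of $X^*$. This is exactly where the $\ell_1$-decomposition $X^{***}=X^*\oplus_1 X^{\perp}$ is used, via a short triangle-inequality estimate; the phrase ``exploiting the $L$-summand structure'' does not yet pin this down, and without it you only get a Banach space isomorphic to a predual, which is not enough to contradict strong uniqueness.

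Your route for~(i), through the unique norm-one projection supplied by \cite[Theorem~5.7.6]{Bo} and the transported projection $j_{X^{**}}\circ\Phi^{-1}\circ\rho_Y\circ\Phi^{**}$, is correct: kernel matching gives $(\Phi^*(j_Y Y))^{\perp}=(X^*)^{\perp}$ in $X^{(IV)}$, hence $\Phi^*(j_Y Y)=X^*$, and $\Phi$ is an adjoint. It is, however, more circuitous than the argument in \cite{HWW}, which works directly with the $L$-summand structure of $X^*$ inside $X^{***}$ and does not pass through Asplundness or Fr\'echet smooth renormings. Your detour imports machinery of weight comparable to the conclusion itself; in particular, the uniqueness of the norm-one projection onto $X^{**}$ already follows from $X^{**}$ being an $M$-summand in $X^{(IV)}$, so invoking \cite{Bo} at that point is unnecessary.
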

An application of Proposition \ref{prop-strong-1} to the question of $w^*$-$w$-continuity in duals of nonreflexive $M$-embedded spaces is the following:
\begin{thm}\label{t3.5}
Let $X$ be a nonreflexive $M$-embedded space. Then, there exists a subspace $Y$ of $X^{**}$ such that $X^{**}=Y^{**}$ and $Y\sm X\neq \emptyset$. Consequently there is no point of continuity for $id: (B(X^{**}),w^*)\longrightarrow (B(X^{**}),w)$.
\end{thm}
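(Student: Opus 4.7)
The plan is to handle the theorem's two assertions in sequence. For the existence of $Y$, I will invoke Proposition~\ref{prop-strong-1}(ii), which tells us that the nonreflexive $M$-embedded space $X$ is not a strongly unique predual of $X^*$. Unpacking the definition, this produces a Banach space $Z$ and a surjective linear isometry $\Phi:X^*\to Z^*$ that fails to be $w^*$-continuous. I take the adjoint $\Phi^*:Z^{**}\to X^{**}$, itself a surjective linear isometry, and set $Y:=\Phi^*(Z)\subset X^{**}$, with $Z$ canonically embedded in $Z^{**}$. Being the image of the complete space $Z$ under an isometric embedding, $Y$ is a norm-closed subspace of $X^{**}$ isometric to $Z$; consequently $Y^{**}$ is isometric to $Z^{**}$, which is in turn isometric to $X^{**}$ via $\Phi^*$. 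Hence $Y^{**}=X^{**}$.

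To verify $Y\setminus X\neq\emptyset$, I argue by contradiction: suppose $\Phi^*(Z)\subset X$, and set $S:=\Phi^*|_Z:Z\to X$. Then $S$ is a bounded linear map, and the standard duality computation $\langle S^*(x^*),z\rangle = \langle x^*,\Phi^*(z)\rangle = \langle \Phi(x^*),z\rangle$ yields $S^*=\Phi$. But then $\Phi$ would be the adjoint of a bounded operator, hence $w^*$-continuous, contradicting its choice.

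For the ``consequently'' clause I will give a direct argument based on $M$-embeddedness. By Lemma~\ref{lemma-preserved}(1), every point of $w^*$-$w$-continuity in $B(X^{**})$ already lies in $S(X)$, so it suffices to exclude $x\in S(X)$. Applying Lemma~\ref{lemma-godefroy} one level up (with $X^*$, $X^{**}$ playing the role of $X$, $X^*$), such an $x$ is a point of continuity if and only if it admits a unique norm-preserving extension, as a functional on $X^*$, to $X^{***}$. Since $X$ is $M$-embedded, $X^{***}=X^*\oplus_1 X^\perp$, and dualizing yields $X^{(IV)}=X^{**}\oplus_\infty(X^\perp)^*$. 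Under this identification, every pair $(x,g)$ with $g\in(X^\perp)^*$ and $\|g\|\leq 1$ restricts to $x$ on $X^*$ and has norm $\max(1,\|g\|)=1$, so produces a norm-preserving extension. Nonreflexivity forces $X^\perp\neq\{0\}$, hence $(X^\perp)^*\neq\{0\}$, so infinitely many such extensions exist; uniqueness fails and $x$ is not a point of continuity.

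The delicate part is keeping the tower of higher duals straight: applying Lemma~\ref{lemma-godefroy} at the level of $X^{**}$ rather than $X^*$, and using the dual $M$-summand decomposition $X^{(IV)}=X^{**}\oplus_\infty(X^\perp)^*$ to exhibit the non-uniqueness of extensions. Once these identifications are fixed, both halves of the theorem are short.
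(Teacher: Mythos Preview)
Your argument is correct. For the first assertion you proceed exactly as the paper does in its step~(a): invoke Proposition~\ref{prop-strong-1}(ii), transport the ``wrong'' predual into $X^{**}$ via the adjoint, and derive $Y\setminus X\neq\emptyset$ from the failure of $w^*$-continuity. Nothing to add there.

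For the ``consequently'' clause you take a genuinely different route. The paper's proof is constructive in a stronger sense: in steps (b)--(d) it shows that any hypothetical point of continuity $x_0^{**}$ must lie in \emph{every} predual-type subspace $T^*(Y)$, and then revisits the explicit HWW construction behind Proposition~\ref{prop-strong-1} to manufacture, for each fixed $x_0\in S(X)$, a particular choice of $Y$ with $x_0\notin T^*(Y)$; the contradiction closes the argument. Your approach bypasses this entirely: you dualize the $L$-decomposition $X^{***}=X^*\oplus_1 X^\perp$ to the $M$-decomposition $X^{(IV)}=(X^\perp)^\perp\oplus_\infty (X^*)^\perp\cong X^{**}\oplus_\infty (X^\perp)^*$, observe that for $x\in S(X)$ the canonical image in $X^{(IV)}$ sits in the first summand as $(x,0)$, and then note that every $(x,g)$ with $\|g\|\le 1$ is an equally good norm-preserving extension of $x$ from $X^*$ to $X^{***}$. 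Lemma~\ref{lemma-godefroy} (applied to $X^*$ in place of $X$) then rules out continuity at $x$. This is shorter and uses only the bare definition of $M$-embeddedness, whereas the paper's argument ties the conclusion more tightly to the subspace $Y$ produced in the first part and yields the sharper by-product that points of continuity would have to lie in the intersection of all predual copies. One small notational caution: the summand you call ``$X^{**}$'' in $X^{(IV)}$ is really $(X^\perp)^\perp$, which agrees with the canonical image of $X^{**}$ only on $J_1(X)$; since you apply it only at $x\in X$ this causes no trouble, but it is worth saying explicitly.
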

\begin{proof} To organise the proof, we shall split the argument in several steps:

(a) Since $X$ is not a strongly unique predual of $X^*$ (see Proposition \ref{prop-strong-1} above), there exists a Banach space $Y$ and an isometric onto isomorphism $T:X^*\longrightarrow Y^*$ that is not $w^*$-$w^*$-continuous. Obviously, $Y^{**}$ is isometrically isomorphic to $X^{**}$ (and so $Y$ can be identified to $T^{*}(Y)\ (\subset X^{**})$). The fact that $T$ is not $w^*$-$w^*$-continuous shows that $T^*(Y)\setminus X\not=\emptyset$. Indeed, there exists a $w^*$-null net $\{x^*_i\}$ in $X^*$ such that $\{T(x^*_i)\}$ is not $w^*$-null in $Y^*$. Assume that $T^*(Y)\subset X$. Then, given $y\in Y$ we have $\langle y,T(x^*_i)\rangle=\langle T^*(y),x^*_i\rangle\rightarrow 0$. Thus, $T(x^*_i)\stackrel{w^*}{\rightarrow }0$, a contradiction. (Incidentally, that $T^*(Y)\setminus X\not=\emptyset$ is, in fact, {\em equivalent} to the non-$w^*$-$w^*$-continuity of $T$.) This proves the first part of the statement.

\medskip

(b) Let $x^{**}_0\in B(X^{**})$ be a point of continuity of $id:(B(X^{**}),w^*)\longrightarrow (B(X^{**}),w)$ (notice that, by Lemma \ref{lemma-sphere}, $x^{**}_0\in S(X^{**})$). It is easy to prove that $x^{**}_0\in X\cap T^*(Y)$. Indeed, by Lemma \ref{lemma-preserved}, $x^{**}_0\in S(X)$. The space $T^*(Y)$ is a predual of $X^{*}$. By Lemma \ref{lemma-preserved} again, $x^{**}_0\in T^*(Y)$.  (Another argument for the same conclusion is the following: First, it is simple to show that $B(T^*(Y))$ is $w^*$-dense in $B(X^{**})$. Thus, we can find a net $\{T^*(y_i)\}$ in $B(T^*(Y))$ that $w^*$-converges to $x^{**}_0$ and, by assumption, it is also $w$-convergent (to $x^{**}_0$). Since $T^*(Y)$ is $\nn$-closed in $X^{**}$, we get $x^{**}_0\in T^*(Y)$.) So we conclude that
\begin{equation}\label{eq-exclusion}
x^{**}_0\in X\cap T^*(Y).
\end{equation}

\medskip

(c) For the last part, we shall prove  the following

{\bf Claim}: {\em Given $x^{**}_0\in S(X^{**})$  a point of continuity of $id:(B(X^{**}),w^*)\longrightarrow (B(X^{**}),w)$ (and then $x^{**}_0\in S(X))$, the space $Y$ considered above can be chosen so that $x^{**}_0\not\in T^*(Y)$}.

To show the Claim, we need to consider the construction in the proof of \cite[Proposition III.2.10]{HWW}: There, a point $y^{***}\in X^{\perp}\ (=(X^{**}/X)^*)$ such that $\|y^{***}\|=1$ and that attains its norm at $\widehat{x^{**}}\in S(X^{**}/X)$ is selected, and the $w^*$-closed hyperplane $H:=\ker \widehat{x^{**}}$ of $X^{\perp}$ is constructed. Our observation is that, given an arbitrary $x_0\in S(X)$, we can find $p^*_0\in S(X^*)$ such that $\langle x_0,p^*_0\rangle=1$. This is used to define $Z:=H\oplus \mathbb R(y^{***}+sp^*_0)$ as in the original construction, where $s$ is an arbitrary real number in $(0,1]$. Thus $Z$ is a $w^*$ closed subspace of $X^{***}$. It was shown during the proof of \cite[Proposition III.2.10]{HWW}  that $X^{***}=X^*\oplus Z$, and that the canonical associated projection $\pi:X^{***}\rightarrow X^*$ associated to this decomposition is a contraction (here we need the $M$-embedded hypothesis again) . It is enough to observe now that $Z^{\perp}\ (\subset X^{**})$ (the orthogonal to $Z$  with respect to the duality $\langle X^{**},X^{***}\rangle$) is a space with the properties enjoyed by $T^*(Y)$ (the space mentioned at the beginning of the proof). It is clear then that $x_0\not\in Z^{\perp}$. This concludes the proof of the Claim.

(d) To finalize the proof of the theorem it is enough to use the Claim together with (b) (in particular, equation (\ref{eq-exclusion}) above).
\end{proof}

We now give an application to von Neumann algebras.
\begin{prop}
    Let $\mathcal{A}$ be a von Neumann algebra and the predual $X$ of $\mathcal{A}$ has the RNP. Then, there is no point of continuity of $id: (B(\mathcal{A}),w^*)\longrightarrow (B(\mathcal{A}),w)$.
\end{prop}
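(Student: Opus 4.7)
The plan is to realize $\mathcal{A}$ as the bidual of a suitable nonreflexive $M$-embedded Banach space $Y$, and then invoke Theorem~\ref{t3.5} to conclude directly.

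The crucial first step is to leverage the RNP hypothesis to pin down the structure of $\mathcal{A}$: if $X = \mathcal{A}_*$ has the Radon--Nikod\'ym property, then $\mathcal{A}$ must be \emph{atomic}, i.e., there exists a family $\{H_i\}_{i \in I}$ of Hilbert spaces such that $\mathcal{A}$ is isometrically $\ast$-isomorphic to the $\ell^\infty$-direct sum $(\bigoplus_{i \in I} B(H_i))_{\ell^\infty}$. This is the main obstacle of the proof: it rests on the structure theory of von Neumann algebras whose predual has the RNP (the scattered/atomic classification, paralleling the commutative case where $L^1(\mu)$ has the RNP precisely when $\mu$ is purely atomic). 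Once this reduction is granted, the remainder of the argument is clean bookkeeping using tools already developed in the paper.

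Once the atomic form of $\mathcal{A}$ is in hand, I would set $Y := (\bigoplus_{i \in I} \mathcal{K}(H_i))_{c_0}$. Since $\mathcal{K}(H_i)$ is an $M$-ideal in its bidual $B(H_i)$ \cite[Example~III.1.4]{HWW}, and $c_0$-sums preserve $M$-embeddability, $Y$ is $M$-embedded. A direct computation of duals yields $Y^* = (\bigoplus_{i} S_1(H_i))_{\ell^1}$ (the $\ell^1$-sum of trace-class operators) and $Y^{**} = (\bigoplus_{i} B(H_i))_{\ell^\infty} = \mathcal{A}$ isometrically. By Sakai's uniqueness of the predual of a von Neumann algebra, $Y^*$ coincides isometrically with the given predual $X$, so the $w^*$-topology on $\mathcal{A}$ induced by $X$ is exactly the $w^*$-topology on $Y^{**}$ induced by $Y^*$. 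Since $\mathcal{A}$ is nonreflexive (the standing assumption of Section~\ref{sect-higher}), the space $Y$ is nonreflexive as well.

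It then suffices to apply Theorem~\ref{t3.5} to the nonreflexive $M$-embedded space $Y$: the resulting statement that no element of $B(Y^{**})$ is a point of continuity for $id: (B(Y^{**}),w^*) \longrightarrow (B(Y^{**}),w)$ translates verbatim into the desired conclusion for $B(\mathcal{A})$.
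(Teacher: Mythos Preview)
Your proposal is correct and follows essentially the same route as the paper: invoke the structure theorem \cite[Proposition~IV.2.9]{HWW} to write $\mathcal{A}=\bigoplus_{\ell^\infty}\mathcal{L}(H_i)$ and $X=\bigoplus_{\ell^1}\mathcal{K}(H_i)^*$, recognize $\mathcal{A}$ as the bidual of the $M$-embedded space $Y=\bigoplus_{c_0}\mathcal{K}(H_i)$, and apply Theorem~\ref{t3.5}. Your extra care with Sakai's uniqueness to match the $w^*$-topologies is not needed once one quotes the cited result, which already identifies the predual $X$ with $Y^*$ directly.
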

\begin{proof}
    By \cite[Proposition~IV.2.9]{HWW}, there exists a family $\{H_i\}_{i\in I}$ of Hilbert spaces such that $\mathcal{A}=\bigoplus_{\ell^\infty}\mathcal{L}(H_i)$ and $X=\bigoplus_{\ell^1}\mathcal{K}(H_i)^*$ (recall that $\mathcal{K}(H_i)^*$ is the space of all trace class operators with trace norm). Thus $\mathcal{A}=\left(\bigoplus_{c_0}\mathcal{K}(H_i)\right)^{**}$. It is known that $\bigoplus_{c_0}\mathcal{K}(H_i)$ is $M$-embedded (see \cite[Example~III.1.4]{HWW}). Hence the result follows from Theorem~\ref{t3.5}.
\end{proof}
\section{Geometry of  higher duals of Banach spaces}\label{sect-higher-2}
      Let $X$ be a nonreflexive Banach space. Let $x^\ast \in S(X^\ast)$ be a point of $w^\ast$-$w$-continuity. In general $x^\ast$ need not be a point of $w^\ast$-$w$-continuity in $S(X^{\ast\ast\ast})$.  We will be using \cite[Proposition~4.1]{GI} in what follows for a renorming technique to produce such examples (see Theorem \ref{thm-star-not-3star} below). For instances of spaces satisfying the hypothesis there, see, e.g.,  \cite[Theorem 1.d.3.]{LT}.

      \medskip

The concept of very smoothness lies in between G\^ateaux and Fr\'echet smoothness (see, e.g., \cite{Di}): Let $(X,\nn)$ be a Banach space. A point $0\not=x\in X$ is said to be a {\bf very smooth point} (or the norm of $X$ is {\bf very smooth at $x$}) if the norm of $X$ is G\^ateaux smooth at $x$ and the duality map from $X$ to $X^*$ is $\nn$-$w$-upper semi-continuous at $x$. The relevance to our topic is the equivalence between (i) and (iv) in \cite[Remark 3.1]{GI}, a result that we reproduce here for the sake of completeness:

\begin{lem}\cite[Remark 3.1]{GI}\label{lemma-godefroy-smooth}
If $x\in S(X)$ is a point of G\^ateaux smoothness, and $x^*=\nn'(x)$, then the following are equivalent:

(i) $x$ is a very smooth point of $X$.

(ii) The duality mapping is $\nn$-$w$-upper semicontinuous at $x$.

(iii) $x^*$ has a unique norm-preserving extension to $X^{**}$.

(iv) $id:(B(X^*),w^*)\longrightarrow (B(X^*),w)$ is continuous at $x^*$.

(v) $x$ is a point of G\^ateaux smoothness for $X^{**}$.

(vi) If $\{x^*_n\}$ is a sequence in $B(X^*)$ such that $\langle x,x^*_n\rangle\rightarrow 1$, then $x^*_n\stackrel{w}\rightarrow x^*$.
\end{lem}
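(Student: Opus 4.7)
The plan is to close the circle (i) $\Leftrightarrow$ (ii) $\Rightarrow$ (iv) $\Leftrightarrow$ (iii) $\Leftrightarrow$ (v) together with the side loop (iv) $\Rightarrow$ (vi) $\Rightarrow$ (ii); three of the links come essentially for free. First, (i) $\Leftrightarrow$ (ii) is simply the definition of very smoothness recalled immediately before the lemma, under the standing G\^ateaux smoothness hypothesis. Second, (iii) $\Leftrightarrow$ (iv) is Lemma \ref{lemma-godefroy} applied to $x^*\in S(X^*)$. Third, (iii) $\Leftrightarrow$ (v) holds because, by the G\^ateaux smoothness of $X$ at $x$, a functional $y^{***}\in S(X^{***})$ attains the value $1$ at $x$ if and only if its restriction to $X$ equals $x^*$, i.e., if and only if $y^{***}$ is a norm-preserving extension of $x^*$ to $X^{**}$; so uniqueness of such an extension (iii) is exactly the uniqueness of the norming functional at $x\in S(X^{**})$ (v).

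For the side loop: to prove (iv) $\Rightarrow$ (vi), given $\{x_n^*\}\subset B(X^*)$ with $\langle x,x_n^*\rangle\to 1$, the $w^*$-lower semicontinuity of the norm together with G\^ateaux smoothness of $X$ at $x$ forces every $w^*$-cluster point of $\{x_n^*\}$ in $B(X^*)$ to lie in $S(X^*)$ and attain $1$ at $x$, hence to equal $x^*$; by $w^*$-compactness of $B(X^*)$ the sequence therefore $w^*$-converges to $x^*$, and (iv) upgrades this to $x_n^*\stackrel{w}\rightarrow x^*$. For (vi) $\Rightarrow$ (ii), given any $y_n\to x$ in norm in $X$ and any choice $y_n^*\in J(y_n)$, one has $\langle x,y_n^*\rangle=\|y_n\|+\langle x-y_n,y_n^*\rangle\to 1$, so (vi) yields $y_n^*\stackrel{w}\rightarrow x^*$, which (since $X$ is metrizable) is precisely the norm-to-weak upper semicontinuity of the duality map $J$ at $x$ required by (ii).

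The only substantive implication is (ii) $\Rightarrow$ (iv), which is where I expect the main obstacle to lie. Given a net $\{x_\alpha^*\}\subset B(X^*)$ with $x_\alpha^*\stackrel{w^*}\rightarrow x^*$, one has $\langle x,x_\alpha^*\rangle\to 1$ and $\|x_\alpha^*\|\to 1$. Choosing $\delta_\alpha>0$ tailored to the defect $1-\langle x,x_\alpha^*/\|x_\alpha^*\|\rangle$ so that $\delta_\alpha\to 0$, the Bishop--Phelps--Bollob\'as theorem applied to the pair $(x,x_\alpha^*/\|x_\alpha^*\|)\in S(X)\times S(X^*)$ produces, eventually in $\alpha$, a pair $(y_\alpha,\tilde x_\alpha^*)\in S(X)\times S(X^*)$ with $\tilde x_\alpha^*\in J(y_\alpha)$, $\|y_\alpha-x\|<\delta_\alpha$, and $\bigl\|\tilde x_\alpha^*-x_\alpha^*/\|x_\alpha^*\|\bigr\|<\delta_\alpha$. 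Hypothesis (ii) then yields $\tilde x_\alpha^*\stackrel{w}\rightarrow x^*$; the norm approximation pulls this back to $x_\alpha^*/\|x_\alpha^*\|\stackrel{w}\rightarrow x^*$, and since $\|x_\alpha^*\|\to 1$ we conclude $x_\alpha^*\stackrel{w}\rightarrow x^*$, proving (iv). The delicate point is exactly this transfer: bridging an essentially primal, sequential upper-semicontinuity hypothesis with a net-wise dual continuity conclusion; Bishop--Phelps--Bollob\'as is the crucial quantitative ingredient that enables it.
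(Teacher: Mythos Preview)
The paper does not supply a proof of this lemma; it is quoted verbatim as \cite[Remark 3.1]{GI} and used as a black box (the only related argument in the paper is Lemma~\ref{lemma-godefroy}, which handles exactly your link (iii)\,$\Leftrightarrow$\,(iv)). So there is no ``paper's own proof'' to compare against.

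Your argument is correct. The equivalences (i)\,$\Leftrightarrow$\,(ii), (iii)\,$\Leftrightarrow$\,(iv), (iii)\,$\Leftrightarrow$\,(v) are indeed immediate from the definitions and Lemma~\ref{lemma-godefroy}, and your side loop (iv)\,$\Rightarrow$\,(vi)\,$\Rightarrow$\,(ii) is clean. The only step requiring care is (ii)\,$\Rightarrow$\,(iv), and your use of the Bishop--Phelps--Bollob\'as theorem is a standard and efficient way to bridge the gap: the quantitative nature of BPB lets you pass from a $w^*$-convergent net in $B(X^*)$ to a norm-convergent net of base points in $S(X)$, which is exactly the input the (primal, topological) upper-semicontinuity hypothesis (ii) accepts; since upper semicontinuity at a singleton value is a genuine topological statement, it applies to nets without difficulty. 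One cosmetic point: in (vi)\,$\Rightarrow$\,(ii) you should take $y_n\in S(X)$ (or normalize), so that $y_n^*\in J(y_n)\subset S(X^*)$ and $\langle x,y_n^*\rangle\to 1$ is immediate; this does not affect the argument, since failure of upper semicontinuity at $x\in S(X)$ can always be witnessed along $S(X)$.
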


\begin{rem}\label{rem-local}\rm
Observe that the equivalence $(iii)\Leftrightarrow(iv)$ in Lemma \ref{lemma-godefroy-smooth} holds for every point $x^*\in S(X^*)$, not necessarily the derivative of a point of G\^ateaux differentiability in $S(X)$. This was the content of Lemma \ref{lemma-godefroy}.
\end{rem}

\begin{thm}\label{thm-star-not-3star}
Let $X$ be a Banach space such that $X^{**}/X$ is a nonreflexive separable space. Then, there is a renorming of $X$, denoted by $Z$, and $x^*\in S(Z^*)$, such that $x^*$ is a point of continuity of $id: (B(Z^{*}),w^*)\longrightarrow (B(Z^{*}),w)$ but not a point of continuity of $id: (B(Z^{***}),w^*)\longrightarrow (B(Z^{***}),w)$.
\end{thm}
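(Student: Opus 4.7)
The strategy is exactly the one advertised at the start of this section: invoke the renorming construction of \cite[Proposition~4.1]{GI}. Since $X^{**}/X$ is nonreflexive and separable, that proposition delivers an equivalent norm on $X$---call the resulting Banach space $Z$---together with a point $x_0\in S(Z)$ whose Gateaux derivative $x^*:=\nn'(x_0)\in S(Z^*)$ admits a unique norm-preserving extension from $Z$ to $Z^{**}$ (so $x_0$ is very smooth in $Z$), but such that $x^*$, regarded canonically as a norm-one functional on $Z^{**}$, does \emph{not} admit a unique norm-preserving extension to $Z^{(IV)}$. The entire analytic content of the theorem is embedded in this renorming.

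Given this data, the conclusion is a two-step translation through the equivalences already collected in the excerpt. For the positive part, the implication (i) $\Rightarrow$ (iv) of Lemma \ref{lemma-godefroy-smooth} (or, equivalently, Lemma \ref{lemma-godefroy} applied to $Z$ itself) converts the uniqueness of the norm-preserving extension of $x^*$ from $Z$ into $Z^{**}$ into the statement that $x^*$ is a point of continuity of $id: (B(Z^*),w^*)\lgra (B(Z^*),w)$. For the negative part, apply Lemma \ref{lemma-godefroy} again, but now with $Z^{**}$ playing the role of ``$X$'': a norm-one element of $(Z^{**})^*=Z^{***}$ is a point of $w^*$-$w$ continuity of $id:(B(Z^{***}),w^*)\lgra (B(Z^{***}),w)$ if and only if it has a unique norm-preserving extension from $Z^{**}$ into $(Z^{**})^{**}=Z^{(IV)}$. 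Since this uniqueness is precisely what fails for $x^*$ by the construction, $x^*$ is not a point of continuity of $id:(B(Z^{***}),w^*)\lgra (B(Z^{***}),w)$, as required.

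The anticipated main obstacle is the renorming itself, which is outsourced to \cite[Prop.~4.1]{GI}: one must exploit the nonreflexive separable quotient $X^{**}/X$ to engineer a norm on $X$ whose support structure at a chosen unit vector is rigid enough to force uniqueness of the Hahn--Banach extension into $Z^{**}$, while the further quotient $Z^{(IV)}/Z^{**}$---still nonreflexive---retains enough slack to accommodate two distinct norm-preserving extensions at the next dual level. Once that result is granted, the argument above is just bookkeeping through Lemmas \ref{lemma-godefroy} and \ref{lemma-godefroy-smooth}.
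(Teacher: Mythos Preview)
Your argument is correct when $X$ itself is separable, and in that case it coincides with the first paragraph of the paper's proof. The gap is that \cite[Proposition~4.1]{GI} is stated for separable $X$, not merely for $X$ with $X^{**}/X$ separable; the hypothesis of the theorem does not force separability of $X$ (take, for instance, $X=R\oplus S$ with $R$ a nonseparable reflexive space and $S$ separable with $S^{**}/S$ nonreflexive and separable). So invoking the proposition directly on $X$ is not justified in general.

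The paper closes this gap with an additional reduction step: by a theorem of Valdivia \cite{V}, the hypothesis that $X^{**}/X$ is separable yields a topological decomposition $X=R\oplus S$ with $R$ reflexive and $S^{**}$ separable (hence $S$ separable and $S^{**}/S\cong X^{**}/X$ nonreflexive). One then applies the separable case to $S$ to obtain a renorming $(S,\tnorm\cdot\tnorm)$ and a functional $f_0\in S((S,\tnorm\cdot\tnorm)^*)$ with the desired behaviour, renorms $X$ as $Z:=R\oplus_\infty (S,\tnorm\cdot\tnorm)$, and takes $f\in S(Z^*)$ to be the extension of $f_0$ by zero on $R$. Since $(S,\tnorm\cdot\tnorm)$ is an $M$-summand (hence an $M$-ideal) in $Z$, Theorem~\ref{t31} lifts the $w^*$-$w$-continuity of $f_0$ in $B((S,\tnorm\cdot\tnorm)^*)$ to $f$ in $B(Z^*)$; conversely, the two distinct norm-preserving extensions of $f_0$ to $S^{(IV)}$ produce two distinct norm-preserving extensions of $f$ to $Z^{(IV)}$, so $f$ fails $w^*$-$w$-continuity in $B(Z^{***})$. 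Your translation through Lemma~\ref{lemma-godefroy} is exactly the right bookkeeping once one is in the separable setting; what is missing is this Valdivia decomposition and the $M$-ideal lifting.
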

\begin{proof}
Assume first that $X$ is separable. In \cite[Proposition 4.1]{GI} it is proved that, in these circumstances, there exists an equivalent norm $\nnn$ on $X$ such that, denoting $Z$ the Banach space $(X,\nnn)$, we can find a very smooth point $x\in S(Z)$ which is not a very smooth point of $Z^{**}$.   Thus, it follows from Lemma \ref{lemma-godefroy} above that $x^*$ is a point of continuity for $id:(B(Z^{*}),w^*)\longrightarrow (B(Z^{*}),w)$, yet it is not a point of continuity for $id:(B(Z^{***}),w^*)\longrightarrow (B(Z^{***}),w)$. This proves the assertion in this particular case.

For the general case, notice that, by \cite{V}, the hypothesis implies that $X$ is the topological direct sum of a reflexive Banach space $R$ and a Banach space $S$ such that $S^{**}$ is separable. In particular, $S^{**}/S$ is separable (and nonreflexive as it is isomorphic to $X^{**}/X$). Thus, by what has been already proved, there exists an equivalent norm $\||.\||$ on $S$ and a point of continuity $f_0\in B((S,\||.\||)^*)$ of $id:(B((S,\||.\||)^*), w^*) \to (B((S,\||.\||)^*), w)$ that is not a point of continuity of $id:(B((S,\||.\||)^{***}), w^*) \to (B((S,\||.\||)^{***}), w)$. Let us define an equivalent norm (called again $\||.\||$) on $X$ by letting  $X=R\oplus_\infty S$. To simplify the notation, we denote $(X,\||.\||)$ by $Z$. Obviously, $Z^*=R^*\oplus_1 S^*$. Let $f\in Z^*$ be such that $f|_R=0$ and $f|_S=f_0$. By Theorem \ref{t31}, it is a point of continuity of $id:(B(Z^*), w^*)\longrightarrow (B(Z^*), w)$. However, the fact that two different norm-preserving extensions of $f_0$ from $S^{**}$ to $S^{(IV)}$ exists shows as before that $f$ is not a point of continuity of $id: (B(Z^{***}),w^*)\longrightarrow (B(Z^{***}),w)$.
\end{proof}

\section{Extreme points of continuity}\label{sect-extreme-2}
Since  $id: (B(X)^*, w^*) \longrightarrow (B(X)^*, w)$ is an affine map, it is natural to ask whether $id$ has an extreme point that is of continuity as soon as it has a point of continuity. This was mentioned already in Subsection \ref{subsec-extreme}.

\medskip

For any positive measure space $(\Omega,\Sigma, \mu)$ and for $1 \leq p <\infty$, let $L^p(\mu,X)$ be the space of Bochner integrable functions (for an authoritative reference see, e.g., \cite{DU}). In the case of points of $w$-$\nn$-continuity, for a non-atomic measure $\mu$ and for $1<p<\infty$, it was shown in \cite[Corollary 2.4]{HL} that $f$ is a point of $w$-$\nn_p$-continuity of $B_{L^p(\mu, X)}$ if and only if $f$ is a denting point of $B_{L^p(\mu, X)}$ (equivalently, an extreme point of $w$-$\nn_p$-continuity).  The next theorem treats the $w^*$-$w$-case in the same context.
\begin{thm}\label{4t2}
Let $(\Omega, \Sigma, \mu)$ be a positive measure space. If $\Lambda \in S(L^1(\mu))$ is a point of continuity of $id: (B(L^1(\mu)^{**}),w^*)\longrightarrow (B(L^1(\mu)^{**}),w)$, then $\mu$ has an atom. Hence $S(L^1(\mu))$ has an extreme point which is also a point of continuity of $id: (B(L^1(\mu)^{**}),w^*)\longrightarrow (B(L^1(\mu)^{**}),w)$.
\end{thm}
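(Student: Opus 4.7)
The plan is to use Lemma \ref{lemma-godefroy} to translate the continuity hypothesis into a Hahn--Banach uniqueness statement, and then derive a contradiction by exhibiting two distinct norm-preserving extensions in the non-atomic case. Taking $Y := L^\infty(\mu)$ in Lemma \ref{lemma-godefroy}, so that $Y^* = L^1(\mu)^{**}$, the assumption on $\Lambda \in S(L^1(\mu)) \subset S(L^1(\mu)^{**})$ becomes: viewed as a functional on $L^\infty(\mu)$, $\Lambda$ admits a unique norm-preserving Hahn--Banach extension from $L^\infty(\mu)$ to $L^\infty(\mu)^{**} = L^1(\mu)^{***}$.

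To show that this forces $\mu$ to have an atom, I argue by contradiction. Assume $\mu$ is non-atomic on $S := \{|\Lambda| > 0\}$. Without loss of generality, $\Lambda \ge 0$: replacing $\Lambda$ by $|\Lambda|$ is harmless, since multiplication by $\mathrm{sgn}(\Lambda)$ is a surjective isometry of $L^1(\mu)$ that lifts to all iterated duals and preserves points of continuity. The canonical extension is $\tilde{\Lambda}_1(\xi) := \xi(\Lambda)$ for $\xi \in L^\infty(\mu)^{**}$. For the second extension, exploit the non-atomicity of $\mu|_S$: choose a disjoint measurable partition $S = \bigsqcup_{n} S_n$ with $\int_{S_n} \Lambda\, d\mu = 2^{-n}$, and consider the probability measures $\omega_n := 2^n \Lambda \chi_{S_n} \in S(L^1(\mu)) \subset B(L^1(\mu)^{**})$. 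Any $w^*$-cluster point $\omega_\infty$ of $(\omega_n)$ in $B(L^\infty(\mu)^*)$ is purely finitely additive, as $\omega_\infty(S_k) = 0$ for every $k$ while $\omega_\infty(S) = 1$, ruling out $\omega_\infty \in L^1(\mu)$ by countable additivity. Using this singular state together with the Hahn--Banach theorem, one extends $\Lambda$ to the subspace $L^\infty(\mu) + \mathbb{R}\xi \subset L^\infty(\mu)^{**}$ (where $\xi$ is a well-chosen element of $L^\infty(\mu)^{**}$ detecting $\omega_\infty - \Lambda$) by assigning a value different from $\xi(\Lambda)$, then extends this norm-preservingly to all of $L^\infty(\mu)^{**}$. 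The resulting $\tilde{\Lambda}_2$ has norm one and $\tilde{\Lambda}_2 \ne \tilde{\Lambda}_1$, contradicting uniqueness.

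For the ``hence'' clause, given an atom $A$ of $\mu$, set $\Lambda_A := \chi_A / \mu(A) \in S(L^1(\mu))$. This is an extreme point of $B(L^1(\mu))$ by the standard characterization of extreme points in $L^1$-spaces. Viewed as a functional on $L^\infty(\mu)$, $\Lambda_A$ is a character: every $\phi \in L^\infty(\mu)$ is $\mu$-essentially constant on $A$, so $\Lambda_A(\phi)$ equals this constant value, and $\Lambda_A(\phi\psi) = \Lambda_A(\phi)\Lambda_A(\psi)$. Characters of the commutative von Neumann algebra $L^\infty(\mu)$ correspond to isolated points in its Gelfand spectrum and, as pure states, admit a unique norm-preserving Hahn--Banach extension to the enveloping von Neumann algebra $L^\infty(\mu)^{**}$. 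By Lemma \ref{lemma-godefroy}, $\Lambda_A$ is therefore a point of $w^*$-$w$-continuity of $id:(B(L^1(\mu)^{**}),w^*) \to (B(L^1(\mu)^{**}),w)$.

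The principal technical obstacle is the verification that the second candidate extension $\tilde{\Lambda}_2$ in the non-atomic case genuinely has norm one and differs from $\tilde{\Lambda}_1$: this requires a careful Hahn--Banach analysis of the ``admissible interval'' of values at a suitable witness $\xi \in L^\infty(\mu)^{**}\setminus L^\infty(\mu)$, and it is precisely here that the non-atomicity provides enough room in the Yosida--Hewitt complement of $L^1(\mu)$ inside $L^1(\mu)^{**}$ for the interval to be non-degenerate.
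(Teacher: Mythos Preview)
Your route differs substantially from the paper's. The paper represents $L^{\infty}(\mu)$ as $C(K)$, invokes the structural fact that any $w^*$--$w$ point of continuity in $B(C(K)^*)$ must be of the form $\sum_i\alpha_i\,\delta(k_i)$ with each $k_i$ isolated in $K$, and then uses the $L$-projection $P:L^1(\mu)^{**}\to L^1(\mu)$ to force $\delta(k_i)\in L^1(\mu)$, whence $\mu$ has an atom. The ``hence'' clause then drops out of Lemma~\ref{l22}: writing $\Lambda=\alpha_1\delta(k_1)+(1-\alpha_1)(\dots)$ shows that the extreme point $\delta(k_1)$ inherits the continuity property from $\Lambda$.

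Your main argument has a genuine gap, and it is precisely the one you flag as ``the principal technical obstacle'': the second extension $\tilde\Lambda_2$ is never constructed. The singular element $\omega_\infty$ you build lives in $L^{\infty}(\mu)^*$, while norm-preserving extensions of $\Lambda$ live in $L^{\infty}(\mu)^{***}$; the proposed passage via ``a well-chosen $\xi\in L^{\infty}(\mu)^{**}$ and a non-degenerate Hahn--Banach interval'' is exactly what needs proof, and is not visibly easier than the original statement. Note in particular that one cannot witness failure of $w^*$--$w$ continuity at $\Lambda$ by nets staying inside $L^1(\mu)$, since on $L^1(\mu)\subset L^1(\mu)^{**}$ the restricted $w^*$-topology \emph{is} the weak topology of $L^1(\mu)$; so a genuinely singular ingredient is needed, but your sketch does not explain how $\omega_\infty$ produces an element of $(L^{\infty}(\mu))^{\perp}\subset L^{\infty}(\mu)^{***}$ that can be added to $\hat\Lambda$ without increasing the norm. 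The paper sidesteps this entirely by working with the $C(K)$ and $L$-summand structure rather than attempting a bare Hahn--Banach construction.

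There is also a slip in your ``hence'' paragraph: it is not true that every character on the von~Neumann algebra $L^{\infty}(\mu)$ corresponds to an isolated point of its Gelfand spectrum---only the \emph{normal} characters (those lying in the predual $L^1(\mu)$) do. Your $\Lambda_A$ is normal, so the conclusion is correct, but the stated reason would equally apply to Dirac measures at non-isolated points of $K$, and those do \emph{not} have unique norm-preserving extensions. The clean justification is that $\chi_A$ is a minimal projection in $L^{\infty}(\mu)=C(K)$, giving an isolated point $k_0\in K$; then $C(K)=\mathbb{R}\chi_{\{k_0\}}\oplus_{\infty}C(K\setminus\{k_0\})$ makes $\delta_{k_0}$ a $w^*$--$\|\cdot\|$ point of continuity of $B(C(K)^*)$, and Lemma~\ref{lemma-godefroy} finishes.
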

\begin{proof}
Let $\Lambda \in S(L^1(\mu))$.  It is known that $L^1(\mu)^*$, being a commutative $C^*$-algebra, is isometric to $C(K)$ for some compact Hausdorff space $K$. Thus, consider $\Lambda \in (L^1(\mu))^{**}=C(K)^*$, a point of continuity of $id: (B(L^1(\mu)^{**}),w^*)\longrightarrow (B(L^1(\mu)^{**}),w)$. It is not difficult to show that $\Lambda=\sum_{i=1}^{\infty}\alpha_i\delta(k_i)$, where the $k_i$'s are isolated points in $K$ (for those $i$ for which $\alpha_i \neq 0$) and $0\leq \alpha_i\leq 1$ with $\sum_{i=1}^{\infty}\alpha_i=1$. By \cite[Example.IV.1.1]{HWW}, $L^1(\mu)$ is an $L$-summand in $L^1(\mu)^{**}$. Let $P:L^1(\mu)^{**}\lgra L^1(\mu)$ be the corresponding $L$-projection. We claim that $\delta(k_i)\in L^1(\mu)$ when $\alpha_i\neq 0$. It is enough to show that $P(\delta(k_i))=\delta(k_i)$ for all $i$ such that $\alpha_i\neq 0$. Since $\Lambda \in S(L^1(\mu))$, $\|\Lambda\|=\|P(\Lambda)\|\leq \sum_{i=1}^{\infty} \alpha_i\|P(\delta(k_i))\|\leq 1$. Consequently, $\|P(\delta(k_i))\|=1$. Again, $\|\delta(k_i)\|=\|P(\delta(k_i))\|+\|\delta(k_i)-P(\delta(k_i))\|$ as $P$ is an $L$-projection. Thus $P(\delta(k_i))=\delta(k_i)$ for all $i$ such that $\alpha_i\neq 0$. Since $\alpha_i \neq 0$ for at least one $i$, one can show that the corresponding $\delta(k_i)$ gives raise to an atom for the measure $\mu$, as it is an extreme point of $B(L^1(\mu))$.
 \end{proof}
\begin{thm}\label{thm-4-yes-2-no}
Let $X=L^1[0,1]$, where $[0,1]$ carries the Lebesgue measure. Then $id: (B(X^{**}),w^*)\longrightarrow (B(X^{**}),w)$ is nowhere continuous, but $id: (B(X^{(IV)}),w^*)\longrightarrow (B(X^{(IV)}),w)$ is continuous at some extreme point in $B(X^{**})$.
\end{thm}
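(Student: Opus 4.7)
For part (a), I would combine Lemma \ref{lemma-preserved}(1) with Theorem \ref{4t2}: any $w^\ast$-$w$-continuity point $\Lambda\in B(X^{**})$ of $id$ must lie in $S(X)=S(L^1[0,1])$ by Lemma \ref{lemma-preserved}(1), and Theorem \ref{4t2} would then force Lebesgue measure on $[0,1]$ to carry an atom, which it does not. Hence $id$ has no continuity points in $B(X^{**})$, which is the nowhere-continuity claim.

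The point of part (b) is that, although Lebesgue measure is non-atomic, the bidual $X^{**}=L^\infty[0,1]^\ast$ is itself an abstract $L$-space whose Kakutani representation \emph{does} have atoms. I would fix an isometric lattice isomorphism $X^{**}\cong L^1(\Omega,\Sigma,\nu)$. Extreme points of $B(X^{**})$ certainly exist (by Krein--Milman on the $w^\ast$-compact unit ball, or concretely by passing to the Gelfand picture $X^{**}=M(K)$ and taking any $\delta_k$), and the extreme points of $B(L^1(\nu))$ are exactly the normalized indicators of atoms of $\nu$; this forces $\nu$ to have an atom $A$. I would then show that $e:=\mathbf{1}_A/\nu(A)\in\Ext B(X^{**})$ is a point of continuity of $id:(B(X^{(IV)}),w^\ast)\lgra (B(X^{(IV)}),w)$, in fact even of $id:(B(X^{(IV)}),w^\ast)\lgra (B(X^{(IV)}),\nn)$. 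To do so, apply \cite[Example IV.1.1]{HWW} to the abstract $L$-space $X^{**}$ to get the Yosida--Hewitt $L$-summand decomposition $X^{(IV)}=L^1(\nu)^{**}=L^1(\nu)\oplus_1 W$, where $W$ consists of the purely singular functionals on $X^{(III)}=L^\infty(\nu)$. A short lattice argument shows that every $s\in W$ annihilates $\mathbf{1}_A$: for positive $\phi\in W$, the map $f\mapsto\phi(f\mathbf{1}_A)=(f|_A)\phi(\mathbf{1}_A)$ is a countably additive component of $\phi$, forcing $\phi(\mathbf{1}_A)=0$. Given a net $(\xi_\al)\subset B(X^{(IV)})$ with $\xi_\al\stackrel{w^\ast}{\lgra}e$, decomposing $\xi_\al=f_\al+s_\al$ (with $f_\al\in L^1(\nu)$, $s_\al\in W$, $\|f_\al\|_1+\|s_\al\|\le 1$) and testing the convergence against $\mathbf{1}_A\in X^{(III)}$ yields $\int_A f_\al\,d\nu\to 1$; the chain $\int_A f_\al\,d\nu\le\int_A|f_\al|\,d\nu\le\|f_\al\|_1\le 1$ must collapse in the limit, atomicity of $A$ (so $f_\al|_A$ is $\nu$-a.e.\ constant) gives $\|f_\al-e\|_1\to 0$ and $\|s_\al\|\to 0$, and therefore $\|\xi_\al-e\|\to 0$ in the $L$-sum norm.

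The main obstacle is the structural step of (b): identifying $X^{**}$ as an $L^1$-space whose representing measure has atoms (via Kakutani plus Krein--Milman), together with the ``singular kills atoms'' lemma harvested from Yosida--Hewitt. Once these are in place, the verification at $e$ reduces to routine bookkeeping in the $L$-summand decomposition.
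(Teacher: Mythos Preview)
Your proof is correct. Part (a) coincides with the paper's argument. For part (b), you and the paper reach the same extreme point through dual representations and different $L$-summand decompositions of $X^{(IV)}$.

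The paper stays in the $C(K)$ picture throughout: writing $X^*=C(K)$ and fixing any $k\in K$, the $L$-summand $\text{span}\{\delta(k)\}\subset X^{**}$ dualizes to a one-dimensional $M$-summand $\text{span}\{\chi_{\{k_0\}}\}$ in $X^{(III)}=C(K')$ for some isolated $k_0\in K'$; the resulting decomposition $X^{(IV)}=\text{span}\{\delta(k_0)\}\oplus_1 N'$ then yields $w^*$-$\nn$ continuity at $\delta(k_0)$ in one stroke via the $L$-projection argument of Theorem~\ref{t31}. You instead pass to the Kakutani picture $X^{**}\cong L^1(\nu)$, locate an atom $A$ by combining Krein--Milman with the description of $\Ext B(L^1(\nu))$, and work with the larger Yosida--Hewitt $L$-summand $L^1(\nu)\subset X^{(IV)}$; this forces you to supply the additional ``singular functionals annihilate $\mathbf 1_A$'' lemma before the net computation goes through. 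The two pictures match up exactly (your $e$ is the paper's $\delta(k_0)$ and your $\mathbf 1_A$ is its $\chi_{\{k_0\}}$): the paper's one-dimensional $L$-summand shortcuts your lattice lemma, while your version is more explicit and self-contained, not needing the back-reference to Theorem~\ref{t31}. One small caveat worth recording in your write-up: to identify $X^{(III)}$ with $L^\infty(\nu)$ you should note that the Kakutani representation may be taken with $\nu$ localizable.
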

\begin{proof}
    Since the Lebesgue measure is non-atomic, by Theorem~\ref{4t2} we have that $id: (B(X^{**}),w^*)\longrightarrow (B(X^{**}),w)$ is nowhere continuous. As before, let  $X^*=C(K)$ for some compact Hausdorff space $K$. Fix $k\in K$. We have $C(\Omega)^*=\text{span}\{\delta(k)\}\oplus _1 N$ for some closed subspace $N$. Thus $C(K)^{**}=\text{span}\{\delta(k)\}^{\perp}\oplus _{\infty} N^{\perp}$. Hence $N^{\perp}$ is an one-dimensional $M$-summand. Let $N^{\perp}=\text{span}\{f\}$, for some $f\in C(K)^{**}$. It is well known that $C(K)^{**} = C(K')$ for some compact space $K'$. Hence by \cite[Example~1.4.(a)]{HWW}, we have that $f=\chi_{\{k_0\}}$, for some isolated point $k_0\in K'$. Now since $C(K')^\ast = \text{span}\{\delta(k_0)\}\oplus_1 N'$ for some $w^*$-closed subspace $N' \subset C(K')$, it is easy to see by arguments indicated in Theorem \ref{t31} that $\delta(k_0)$ is a point of continuity of $id: (B(X^{(IV)}),w^*)\longrightarrow (B(X^{(IV)}),\|.\|)$. Hence it is a point of continuity of $id: (B(X^{(IV)}),w^*)\longrightarrow (B(X^{(IV)}),w)$, which is an extreme point.
\end{proof}

\section{$G_{\dd}$ character of the set of points of continuity}\label{sect-g-delta}
We next investigate Banach space $X$ with some geometric structure such that the set of points of continuity of $id: (B(X^{***}),w^*)\longrightarrow (B(X^{***}),w)$ (respectively, $id: (B(X^{***}),w^*)\longrightarrow (B(X^{***}),\nn)$) forms a dense $G_\delta$ subset of $(S(X^{***}),w^*)$.

We do not know examples of nonreflexive  Banach spaces $X$ for which $S(X^{***})$ is a dense $G_\delta$ subset of $(S(X^{(V)}),w^*)$.

\medskip

Let $X$ be Banach space. Put
$$
O_n:=X^*\setminus \left(1-\frac{1}{n}\right)B(X^*),\ \hbox{for }\nin.
$$
Obviously, $O_n$ is $w^*$-open in $X^*$, for $\nin$. It is plain that
\begin{equation}\label{eqSXstar}
S(X^*)=B(X^*)\cap \bigcap_{n\in \mathbb{N}} O_n,
\end{equation}
hence $S(X^*)$ is a (dense) $G_\delta$ subset of $(B(X^*),w^*)$. Notice, too, that $S(X^*)$ is dense in $(B(X^{***}),w^*)$, in particular in $(S(X^{***}),w^*)$, as it follows from Goldstine theorem.

Let $Q$ be the canonical projection associated to the decomposition $X^{***}=X^*\oplus X^{\perp}$.
Put
\begin{equation}\label{eq-s}
{\mathcal S}:= \{x^{***}\in S(X^{\ast\ast\ast}):\  Q(x^{***}) \in S(X^\ast)\}.
\end{equation}
In other words, $\mathcal S$ is the set of all Hahn--Banach extensions of elements in $S(X^*)$. It follows easily from (\ref{eqSXstar}) that $\mathcal S=B(X^{***})\cap \bigcap_{\nin}Q^{-1}(O_n)\ (\subset S(X^{***}))$, hence $\mathcal S$ is a $G_{\dd}$-subset of $(B(X^{***}),w^*)$, in particular of $(S(X^{***}),w^*)$.

The following result has a simple proof:

\begin{prop}\label{prop-s}
A Banach space $X$ is Hahn--Banach smooth if and only if $\mathcal S=S(X^*)$.
\end{prop}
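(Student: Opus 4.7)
The plan is to first reinterpret $\mathcal{S}$ in more concrete terms, then run the two implications as a direct uniqueness argument. The key identification is: for $x^{***}\in X^{***}$, the restriction of $x^{***}$ to $X$ (viewed canonically in $X^{**}$) coincides with $Q(x^{***})\in X^*$, because the complementary summand $X^{\perp}$ annihilates $X$ by definition. In particular, if $x^{***}\in S(X^{***})$ and $Q(x^{***})=:x^*\in S(X^*)$, then $x^{***}$ is a norm-preserving extension of $x^*$ from $X$ to $X^{**}$; conversely, every norm-preserving extension to $X^{**}$ of some $x^*\in S(X^*)$ is an element of $S(X^{***})$ that projects under $Q$ onto $x^*$. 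Thus $\mathcal{S}$ is exactly the set of all norm-preserving Hahn--Banach extensions (to $X^{**}$) of elements of $S(X^*)$.

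For the implication $(\Rightarrow)$, assume $X$ is Hahn--Banach smooth, and fix $x^{***}\in\mathcal{S}$. Set $x^*:=Q(x^{***})\in S(X^*)$. Then $x^{***}$ is a norm-preserving extension of $x^*$. But so is the element $x^*$ itself, viewed in $X^{***}$ via the canonical embedding $X^*\subset X^{***}$, since its norm equals $\|x^*\|=1$ and its restriction to $X$ is $x^*$. By Hahn--Banach smoothness, such an extension is unique, so $x^{***}=x^*\in S(X^*)$. The reverse inclusion $S(X^*)\subseteq\mathcal{S}$ is automatic, because every $x^*\in S(X^*)$ satisfies $Q(x^*)=x^*\in S(X^*)$ and $\|x^*\|_{X^{***}}=1$.

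For the implication $(\Leftarrow)$, assume $\mathcal{S}=S(X^*)$ and fix $x^*\in S(X^*)$. Let $\widetilde{x}\in X^{***}$ be any norm-preserving extension of $x^*$ (the classical Hahn--Banach theorem supplies one). Then $\widetilde{x}\in S(X^{***})$ and $Q(\widetilde{x})=x^*\in S(X^*)$, so $\widetilde{x}\in\mathcal{S}=S(X^*)$. In particular, $\widetilde{x}$ lies in the $X^*$-summand of $X^{***}$, which forces $\widetilde{x}=Q(\widetilde{x})=x^*$. Thus $x^*$ has $x^*$ itself as its unique norm-preserving extension to $X^{**}$, which is precisely the definition of Hahn--Banach smoothness.

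The only mildly non-trivial step is the initial identification of $\mathcal{S}$ with the set of norm-preserving extensions; after that, both directions are one-line uniqueness arguments, matching the paper's claim that the proof is simple.
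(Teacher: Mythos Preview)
Your proof is correct and follows essentially the same route as the paper's. The paper has already recorded (just before the proposition) that $\mathcal S$ is precisely the set of Hahn--Banach extensions of elements of $S(X^*)$, so your opening identification matches theirs; both directions then reduce, in your write-up and in the paper, to the same uniqueness argument via $Q$ and the decomposition $X^{***}=X^*\oplus X^{\perp}$, with only cosmetic differences (you argue the backward implication directly, the paper phrases it as a contradiction).
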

\begin{proof} We shall have in mind that $X^{***}=X^*\oplus X^{\perp}$, and so that $Q:X^{***}\longrightarrow X^{*}$ is the canonical projection associated to this decomposition.
Assume first that $X$ is Hahn--Banach smooth. Let $x^{***}\in \mathcal S$. Thus, $Q(x^{***})\in S(X^*)$, and $Q(x^{***})$ has a unique Hahn--Banach extension to $X^{**}$. It follows that $x^{***}=Q(x^{***})\in S(X^*)$. On the other hand, if $x^*\in S(X^*)$ then $x^*$ is the unique Hahn--Banach extension to $X^{**}$, so $x^*\in \mathcal S$.

Assume now that $\mathcal S=S(X^*)$. Let $x^*\in S(X^*)$. If $y^{***}$ is a different Hahn--Banach extension of $x^*$ to $X^{**}$, then $y^{***}\in\mathcal S$. Moreover, $y^{***}=x^*+x^{\perp}$, where $x^{\perp}\in X^{\perp}$ and, since $y^{***}\not=x^*$, it happens that $x^{\perp}\not=0$. Thus, $y^{***}\not\in X^*$, and this is a contradiction. This shows that $X$ has property-(U) in $X^{**}$.
\end{proof}

In view of Proposition \ref{prop-s} and the comments preceding it, the proof of the following result should be clear (notice that $S(X^*)$ is always $w^*$-dense in $S(X^{***})$ as soon as $X$ is infinite-dimensional):

\begin{prop}\label{5t1}
    Let $X$ be a nonreflexive Hahn--Banach smooth space. Then, $S(X^*)$ is a dense $G_\delta$ subset of $(S(X^{***}),w^*)$.
\end{prop}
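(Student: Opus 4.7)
The plan is to simply assemble three ingredients that have already been developed in the paragraphs preceding the statement: (i) the identification $\mathcal S=S(X^*)$ for Hahn--Banach smooth $X$; (ii) the $G_\delta$ character of $\mathcal S$ in $(S(X^{***}),w^*)$; and (iii) the $w^*$-density of $S(X^*)$ in $S(X^{***})$ for any infinite-dimensional $X$.

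First, I would invoke Proposition \ref{prop-s} to translate the Hahn--Banach smoothness hypothesis into the equality $\mathcal S=S(X^*)$, with $\mathcal S$ defined as in (\ref{eq-s}). This converts the problem into showing that $\mathcal S$ is $w^*$-dense and $w^*$-$G_\delta$ in $S(X^{***})$. The $G_\delta$ part is immediate from the identity $\mathcal S=B(X^{***})\cap\bigcap_{n\in\N}Q^{-1}(O_n)$ established just before the statement: each $O_n$ is $w^*$-open in $X^*$ and $Q:X^{***}\longrightarrow X^*$ is $w^*$-$w^*$-continuous (being the canonical projection associated to the $w^*$-closed decomposition $X^{***}=X^*\oplus X^\perp$), so each $Q^{-1}(O_n)$ is $w^*$-open in $X^{***}$, and the countable intersection with $S(X^{***})$ yields a $w^*$-$G_\delta$ subset of $(S(X^{***}),w^*)$.

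The density step is the only point requiring a small argument. Given $x^{***}\in S(X^{***})$ and a basic $w^*$-open neighbourhood $V$ of $x^{***}$ in $X^{***}$, Goldstine's theorem gives that $B(X^*)$ is $w^*$-dense in $B(X^{***})$, so $V\cap B(X^*)\not=\emptyset$. Since $\nn$ is $w^*$-lower semicontinuous on $X^{***}$, by shrinking $V$ if necessary we may pick $y^*\in V\cap B(X^*)$ with $\|y^*\|$ as close to $1$ as we wish; then $z^*:=y^*/\|y^*\|\in S(X^*)$ and, by the continuity of scalar multiplication in the $w^*$-topology together with the proximity of $\|y^*\|$ to $1$, we still have $z^*\in V$. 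Hence $V\cap S(X^*)\not=\emptyset$, proving $w^*$-density.

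I don't anticipate a real obstacle here: everything is already packaged in the preceding discussion, and the only non-automatic point is the mild rescaling needed to go from the ball-density supplied by Goldstine to the sphere-density required; the $w^*$-lower semicontinuity of the dual norm makes this entirely routine. Combining the three steps yields that $S(X^*)=\mathcal S$ is a $w^*$-dense $G_\delta$ in $(S(X^{***}),w^*)$, as claimed.
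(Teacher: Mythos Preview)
Your proposal is correct and follows exactly the approach the paper intends: the paper itself omits the proof, simply pointing to Proposition~\ref{prop-s}, the preceding $G_\delta$ computation for $\mathcal S$, and the $w^*$-density of $S(X^*)$ in $S(X^{***})$ via Goldstine. Your write-up merely fills in the routine details (notably the rescaling step for density) that the paper leaves to the reader.
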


\begin{rem} \rm It was noticed in \cite{B} that the Hahn--Banach smoothness property is preserved by closed subspaces, quotients, and $c_0$-sums. We do not know if the property `$S(X^\ast)$ is a dense $G_{\delta}$ subset of $(S(X^{***}),w^*)$' has a similar behaviour.
\end{rem}

It follows from \cite[Theorem III.4.6.(e)]{HWW} that any nonreflexive $M$-embedded space can be renormed so that $X^*$ is strictly convex and $X$ is again an $M$-embedded space in the new norm. Thus these examples satisfy the hypothesis of the following easy corollary, whose proof, that will be omitted,  uses Lemma \ref{lemma-preserved} above.
\begin{cor}\label{c46}
	 Let $X$ be a nonreflexive Hahn--Banach smooth space whose dual is strictly convex. Then, $S(X^*)$ is the set of extreme points of $B(X^{***})$ that are points of $w^*$-$w$-continuity. These form a $G_\delta$ dense subset of $(S(X^{***}),w^*)$.
	 \end{cor}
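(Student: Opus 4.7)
The plan is to establish the set equality
\[
S(X^*) = \Ext(B(X^{***})) \cap C,
\]
where $C$ denotes the set of $w^*$-$w$-continuity points of $id:(B(X^{***}),w^*)\longrightarrow (B(X^{***}),w)$; the $G_\delta$-density assertion will then follow at once from Proposition \ref{5t1}. Both inclusions are to be obtained by applying Lemma \ref{lemma-preserved} to the Banach space $X^*$ in place of $X$, so that its bidual plays the role of $X^{***}$.

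For the inclusion $\supseteq$: Lemma \ref{lemma-preserved}(1), applied with $X^*$ in place of $X$, immediately gives $C \subseteq S(X^*)$, hence any extreme point of $B(X^{***})$ lying in $C$ belongs to $S(X^*)$. For the reverse inclusion, fix $x^* \in S(X^*)$ and check (a) $x^* \in C$, and (b) $x^* \in \Ext(B(X^{***}))$. For (a), Hahn--Banach smoothness of $X$ together with Theorem \ref{thm-hb-smooth} already places $x^*$ in the continuity set for $id$ on $B(X^*)$; this is to be promoted to continuity on $B(X^{***})$ via the decomposition $X^{***} = X^* \oplus X^{\perp}$ (where $X^{\perp}$ is Chebyshev by Hahn--Banach smoothness): for any net $(y^{***}_\alpha) \subset B(X^{***})$ with $y^{***}_\alpha \stackrel{w^*}{\rightarrow} x^*$, restriction to $X$ shows that the $X^*$-component converges to $x^*$ in $w^*(X^*,X)$, hence weakly in $X^*$ by Theorem \ref{thm-hb-smooth}, and the $X^{\perp}$-remainder is then controlled using the Chebyshev character of $X^{\perp}$ to conclude $y^{***}_\alpha \rightarrow x^*$ in $w(X^{***},X^{(IV)})$. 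For (b), strict convexity of $X^*$ gives $x^* \in \Ext(B(X^*))$; Lemma \ref{lemma-preserved}(2) applied to $X^*$—using the continuity from (a)—then lifts extremality to $B(X^{***})$.

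The main obstacle is step (a), since the $X^*$-projection in the direct sum $X^{***} = X^* \oplus X^{\perp}$ is only contractive in general, rather than being an $L$-projection as in the $M$-embedded case. In the $M$-embedded setting flagged in the remark preceding the corollary (the principal source of examples), (a) is transparent via Theorem \ref{t31}, because the norm-decomposition forces the $X^{\perp}$-component to converge to $0$ in norm. Beyond $M$-embeddedness, careful bookkeeping on this component through the Hahn--Banach-smooth structure is the delicate point, while the rest of the argument is a routine assembly of the cited lemmas.
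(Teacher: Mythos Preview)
Your step (a) contains a genuine gap, and the ``Chebyshev character of $X^{\perp}$'' does not fill it. Decomposing $y^{***}_\alpha = Q(y^{***}_\alpha) + (I-Q)(y^{***}_\alpha)$, you correctly obtain $Q(y^{***}_\alpha)\stackrel{w}{\to} x^*$ in $X^*$ from Hahn--Banach smoothness. The remainder $(I-Q)(y^{***}_\alpha)\in X^{\perp}$ then tends to $0$ only in $\sigma(X^{***},X^{**})$, and nothing in the Chebyshev property---a statement about uniqueness of metric projections, not about convergence---upgrades this to $\sigma(X^{***},X^{(IV)})$. In the $M$-embedded case this works because the $L$-decomposition forces $\|(I-Q)(y^{***}_\alpha)\|\to 0$; without it you have no control. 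Equivalently (via Lemma~\ref{lemma-godefroy} applied one level up), the inclusion $S(X^*)\subseteq C$ asks that every $x^*\in S(X^*)$, viewed as an element of $X^{***}$, have a \emph{unique} norm-preserving extension to $X^{(IV)}$; this is a third-dual uniqueness statement that is not a formal consequence of Hahn--Banach smoothness of $X$. You correctly flag this as ``the delicate point'' but offer no mechanism to resolve it.

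Two further comments. First, your route to (b) through (a) is unnecessary: if $x^*=\tfrac{1}{2}(y^{***}+z^{***})$ with $y^{***},z^{***}\in B(X^{***})$, then restricting to $X$ and using strict convexity of $X^*$ gives $y^{***}|_X=z^{***}|_X=x^*$, so both are norm-preserving extensions of $x^*$ and Hahn--Banach smoothness forces $y^{***}=z^{***}=x^*$; thus $S(X^*)\subseteq\Ext(B(X^{***}))$ directly. Second, the paper omits its own proof, indicating only that Lemma~\ref{lemma-preserved} is used; that lemma handles your inclusion $\supseteq$ and the extremality part of $\subseteq$, but is silent on (a). Since the paper's stated examples are all $M$-embedded (where (a) is immediate from Theorem~\ref{t31}), your honest assessment---that (a) is transparent in that setting and obscure beyond it---matches what the omitted proof actually supplies.
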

\begin{rem}\rm
Most often in applications it is enough to consider `richness' of points of $w^\ast$-$w$-continuity in $B(X^*)$ that are also norm attaining. This set is, in general, strictly smaller that the set of all points of $w^*$-$w$-continuity in $B(X^*)$. For example, let $X$ be a nonreflexive Hahn--Banach smooth Banach space. Then, the set of all points of $w^*$-$w$-continuity of $B(X^*)$ is precisely $S(X^*)$ (see Theorem \ref{thm-hb-smooth}). However, not every point in $S(X^*)$ is norm-attaining, in view of James' compactness theorem. On the other hand, if $X$ is a nonreflexive M-embedded Banach space, then the set of all points of $w^*$-$w$-continuity of $B(X^{***})$ is, according to Theorem \ref{t31}, precisely $S(X^*)$, and all of them are clearly in $NA(X^{***})$. In this case, then, the set of all $w^*$-$w$-continuity points of $B(X^{***})$ that are also norm-attaining is a dense  $G_{\dd}$  subset of $(S(X^{***}),w^*)$, according to Proposition \ref{5t1} above.
We do not know of other geometric conditions (in the nonreflexive case) that may ensure a similar behaviour.

The analysis of the set of points of $w^*$-$w$-continuity that are also norm-attaining through the linear isometric identifications as done in Example \ref{ex-c} must be done carefully. For instance, (and we keep the notation there), $c_0$ is M-embedded, hence, in particular, the element $\Lambda:=(1/2^n)\in S(\ell_1)$ is of $w^*$-$w$-continuity in $B(c_0^*)\ (=B(\ell_1))$, and is clearly not norm-attaining. However, $T(\Lambda)\ (\in S(c^*))$ satisfies $\langle {\tt 1},T(\Lambda)\rangle=1$ (where ${\tt 1}\in c$ is the constant 1 vector), so it attains its norm (and it is, to be sure, a point of $w^*$-$w$-continuity). The reason for this odd behaviour is, of course, the fact that $T:\ell_1\longrightarrow c^*$ is not $w^*$-$w^*$-continuous. By the way, the set of points of $w^*$-$w$-continuity of $B(c_0^*)\ (=B(\ell_1))$ is $S(c_{00})$ (a dense $F_{\sigma}$ subset of $(S(\ell_1),\nn_1)$), where $c_{00}$ is the space of all finitely supported vectors in $c_0$.
\end{rem}
The last result deals with spaces $X$ that satisfy the stronger property that the $w^*$ and norm topologies coincide on $S(X^*)$. We recall that when ${\mathcal K}(X)$ is an $M$-ideal in ${\mathcal L}(X)$, then $X$ is an $M$-embedded space and the weak$^*$, weak and norm topologies coincide on $S(X^*)$ (see Propositions VI.4.4 and VI.4.6 in \cite{HWW}).
\begin{cor}\label{5t3}
    Let $X$ be a nonreflexive $M$-embedded space such that on $S(X^*)$ both the $w^*$ and the $\nn$ topology coincide. Then, the set of points of continuity of $id: (B(X^{***}),w^*)\longrightarrow (B(X^{***}),\|.\|)$ forms a $G_\delta$ dense subset of $(S(X^{***}),w^*)$.
\end{cor}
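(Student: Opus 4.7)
The proof plan is to identify the set of points of $w^*$-$\|\cdot\|$-continuity of $id:(B(X^{***}),w^*)\longrightarrow (B(X^{***}),\|\cdot\|)$ with $S(X^*)$, and then invoke Proposition~\ref{5t1} to conclude.

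First I would show that every $x^*\in S(X^*)$ is a point of $w^*$-$\|\cdot\|$-continuity. Let $P:X^{***}\longrightarrow X^*$ be the canonical $L$-projection with range $X^*$ and kernel $X^{\perp}$ afforded by the $M$-embedded hypothesis (as in the proof of Theorem~\ref{t31}). Given a net $\{x^{***}_i\}\subset B(X^{***})$ with $x^{***}_i\stackrel{w^*}{\rightarrow} x^*$, one has $P(x^{***}_i)\stackrel{w^*}{\rightarrow} P(x^*)=x^*$. Since the dual norm is $w^*$-lower-semicontinuous and $\|P(x^{***}_i)\|\le \|x^{***}_i\|\le 1$, it follows that $\|P(x^{***}_i)\|\to 1$. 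Thus, up to discarding an initial segment, the normalized vectors $P(x^{***}_i)/\|P(x^{***}_i)\|$ lie in $S(X^*)$ and still $w^*$-converge to $x^*$. Here is where the extra hypothesis enters: by the assumed coincidence of the $w^*$ and norm topologies on $S(X^*)$, these normalized vectors converge in norm to $x^*$; combined with $\|P(x^{***}_i)\|\to 1$, this gives $P(x^{***}_i)\to x^*$ in norm. Finally, since $P$ is an $L$-projection we have $\|x^{***}_i\|=\|P(x^{***}_i)\|+\|(I-P)(x^{***}_i)\|$, forcing $\|(I-P)(x^{***}_i)\|\to 0$, and hence $x^{***}_i\to x^*$ in norm, as required.

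For the reverse inclusion, any point of $w^*$-$\|\cdot\|$-continuity is \emph{a fortiori} a point of $w^*$-$w$-continuity, so by Theorem~\ref{t31} it must lie in $S(X^*)$. Therefore, the set of points of $w^*$-$\|\cdot\|$-continuity of $id:(B(X^{***}),w^*)\longrightarrow (B(X^{***}),\|\cdot\|)$ equals $S(X^*)$, and by Proposition~\ref{5t1} this is a $G_\delta$ dense subset of $(S(X^{***}),w^*)$.

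The main subtlety, and the only step that is not a direct quotation of earlier results, is the normalization argument used to deploy the coincidence of the $w^*$ and norm topologies on $S(X^*)$: one must first verify $\|P(x^{***}_i)\|\to 1$ in order to legitimately pass from $w^*$-convergence of a net in $B(X^*)$ to norm-convergence via normalization, and only then combine with the $L$-projection identity to conclude norm-convergence of the original net $\{x^{***}_i\}$.
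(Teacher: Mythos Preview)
Your proof is correct and follows essentially the same route as the paper: identify the set of $w^*$-$\|\cdot\|$-continuity points of $B(X^{***})$ with $S(X^*)$ and then invoke Proposition~\ref{5t1} (using that $M$-embedded spaces are Hahn--Banach smooth). The paper merely asserts that this identification is ``easy to prove'', whereas you spell out the argument via the $L$-projection and the normalization step; your version is a faithful and correct expansion of what the paper leaves implicit.
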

\begin{proof}
      It is easy to prove that the set of points of continuity of $id:(B(X^{***}),w^*)\to (B(X^{***}),\nn)$ is just $S(X^*)$. By Theorem~\ref{thm-hb-smooth}, $X$ is a Hahn--Banach smooth space. The result now follows from Theorem~\ref{5t1}.
\end{proof}

{\bf Conflict of interest:} The authors declare that there is no conflict of interest.

\bibliographystyle{plain, abbrv}

\section*{Acknowledgments}
This work is part of the project "Classification of Banach spaces using differentiability", funded by the Anusandhan National Research Foundation (ANRF), Core Research Grant, CRG2023-000595. The first author was a postdoctoral fellow in this project. The second named author was partially supported by Project  PID2021-122126NB-C33 (Plan Estatal de
 Investigaci\'on Cient\'{\i}fica, T\'ecnica y de Innovaci\'on 2021-2023, Spain) and the Universitat Polit\`ecnica de Val\`encia (Spain).
\end{document}